\documentclass[11pt]{article}
\usepackage{a4wide}

\usepackage[a4paper, margin=2.3cm]{geometry}
\usepackage{amsmath,amssymb,amsthm}
\usepackage{color}
\usepackage{parskip}
\usepackage{hyperref}
\usepackage{parskip}
\usepackage{setspace}
\usepackage{tikz}
\usepackage{graphicx}

\usepackage{varioref}
\usepackage[ddmmyyyy]{datetime}
\usepackage{mathtools}
\usepackage[h]{esvect}
\usepackage{array}
\usepackage{listings}
\lstset{language=TeX,basicstyle={\normalfont\ttfamily}}
\usepackage{microtype}
\usepackage{booktabs}
\usepackage{braket} 
\usepackage{graphicx}
\usepackage{adjustbox}
\usepackage[ruled,vlined]{algorithm2e}
\usepackage{enumitem}
\usepackage[OT1]{fontenc}
\usepackage[english]{babel}
\usepackage[utf8]{inputenc}
\usepackage[sc]{mathpazo}

\newtheorem{theorem}{Theorem}[section]

\newtheorem{corollary}[theorem]{Corollary}
\newtheorem{proposition}[theorem]{Proposition}
\newtheorem{lemma}[theorem]{Lemma}

\newtheorem{definition}[theorem]{Definition}

\def\F{F}

\def\Fq{\mathbb{F}_q}
\def\R{\mathbb{R}}

\def\F{{\mathbb F}}
\def\R{{\mathbb R}}

\DeclareMathOperator{\rank}{rank}

\newcommand*\circled[1]{\tikz[baseline=(char.base)]{
            \node[shape=circle,draw,inner sep=1pt] (char) {#1};}}

\begin{document}
\pagenumbering{arabic}
\title{An energy decomposition theorem for matrices \\and related questions}
\author{
Ali Mohammadi\thanks{ School of Mathematics, Institute for Research in Fundamental Sciences (IPM), Tehran, Iran. Email:~\href{mailto:a.mohammadi@ipm.ir}{a.mohammadi@ipm.ir}}
\and 
    Thang Pham\thanks{Theory of Combinatorial Algorithms Group, ETH Zürich, Switzerland. Email: \href{mailto:phamanhthang.vnu@gmail.com}{phamanhthang.vnu@gmail.com}}
  \and
  Yiting Wang \thanks{Department of Computer Science, ETH Zürich, Switzerland. Email: \href{mailto:yitwang@student.ethz.ch}{yitwang@student.ethz.ch}}
}
\maketitle
\begin{abstract}
    Given $A\subseteq GL_2(\F_q)$, we prove that there exist disjoint subsets $B, C\subseteq A$ such that $A = B \sqcup C$ and their additive and multiplicative energies satisfying
\[
\max\{\,E_{+}(B),\, E_{\times}(C)\,\}\ll \frac{|A|^3}{M(|A|)},
\]
where
\begin{equation*}
\label{eqn:MAminBVPolyLSSS}
    M(|A|) = \min\Bigg\{\,\frac{q^{4/3}}{|A|^{1/3}(\log|A|)^{2/3}},\, \frac{|A|^{4/5}}{q^{13/5}(\log|A|)^{27/10}}\,\Bigg\}.
\end{equation*}
We also study some related questions on moderate expanders over matrix rings, namely, for $A, B, C\subseteq GL_2(\mathbb{F}_q)$, we have 
\[|AB+C|, ~|(A+B)C|\gg q^4,\]
whenever $|A||B||C|\gg q^{10 + 1/2}$. These improve earlier results due to Karabulut, Koh, Pham, Shen, and Vinh (2019). 
\end{abstract}
\section{Introduction}

Let $\F_q$ denote a finite field of order $q$ and characteristic $p$ and let $M_2(\F_q)$ be the set of two-by-two matrices with entries in $\F_q$. We write $X\ll Y$ or $X=O(Y)$ to mean $X\leq CY$ for some absolute constant $C>0$ and use $X\sim Y$ if $Y\ll X\ll Y$. 

Given subsets $A, B\subseteq M_2(\F_q)$, we define the sum set $A+B$ to be the set $\{a+b: (a, b)\in A\times B\}$ and similarly define the product set $AB$. In this paper, we study various questions closely related to the sum-product problem over $M_2(\F_q)$, which is to determine nontrivial lower bounds on the quantity $\max\{\,|A+A|,\, |AA|\,\}$, under natural conditions on sets $A\subseteq M_2(\F_q)$. 

A result in this direction was proved by Karabulut et al. in \cite[Theorem~1.12]{KaKoPhShVi}, showing that if $A\subseteq M_2(\F_q)$ satisfies $|A|\gg q^3$ then
\begin{equation}
\label{eqn:SRB2}
\max\{\,|A+A|,\, |AA|\,\}\gg \min\left\{\,\frac{|A|^2}{q^{7/2}},\, q^{2}|A|^{1/2}\,\right\}.
\end{equation}

A closely related quantity is the additive energy $E_+(A, B)$ defined as the number of solutions $(a, a^\prime, b, b^\prime)\in A^2\times B^2$ such that $a + b = a^\prime + b^\prime$. The multiplicative energy $E_\times(A, B)$ is defined in a similar manner. We also use, for example, $E_+(A) = E_+(A, A)$. For $\lambda\in M_2(\F_q)$, we define the representation function $r_{A B}(\lambda) = |\{\,(a, b)\in A\times B: a b = \lambda\,\}|$. Note that $r_{A B}$ is supported on the set $AB$ and so we have the identities
\begin{equation}
\label{eqn:1st&2ndM}
    \sum_{\lambda\in A B}r_{A B}(\lambda) = |A||B|\quad \text{and}\quad \sum_{\lambda\in A B}r_{A B}(\lambda)^2= E_{\times}(A, B).
\end{equation}
A standard application of the Cauchy-Schwarz inequality gives
\begin{equation}
\label{eqn:AddECS}
  E_+(A, B)|A+B|,\;  E_\times(A, B)|AB|\geq |A|^2|B|^2.
\end{equation}

Balog and Wooley~\cite{BW} initiated the investigation into a type of energy variant of the sum-product problem by proving that given a finite set $A\subset \R$, one may write $A= B \sqcup C$ such that $\max\{E_+(B), E_\times(C)\}\ll |A|^{3-\delta}$ for some explicit value $0<\delta<1$. The authors also proved a finite field variant of this result. Also, see \cite{HH, MohSte, RNShWin, RudShSt} for quantitative improvements, analogues and various applications of these results.

The main goal of this paper is to study energy variants of the sum-product problem over the ring $M_2(\F_q)$ and in particular to obtain a low energy decomposition result similar to those in \cite{BW}. It is important to note that $M_2(\F_q)$ is not a commutative ring and, in part due to this fact, results known over finite fields do not readily extend to this setting. 

\section{Main results}
Our first theorem is on an energy decomposition of a set of matrices in $M_2(\mathbb{F}_q)$.
\begin{theorem}
\label{thm:BWDM2}
Given $A\subseteq GL_2(\F_q)$, there exist disjoint subsets $B, C\subseteq A$ such that $A = B \sqcup C$ and
\[
\max\{E_{+}(B), E_{\times}(C)\}\ll \frac{|A|^3}{M(|A|)},
\]
where
\begin{equation}
\label{eqn:MAminBVPolyLS}
    M(|A|) = \min\Bigg\{\,\frac{q^{4/3}}{|A|^{1/3}(\log|A|)^{2/3}},\, \frac{|A|^{4/5}}{q^{13/5}(\log|A|)^{27/10}}\,\Bigg\}.
\end{equation}
\end{theorem}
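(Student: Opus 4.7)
The strategy is the Balog--Wooley partition, adapted to the non-commutative ring $M_2(\F_q)$. For each $a \in A$, define the multiplicative popularity
\[
T_\times(a) := \sum_{b \in A} r_{AA}(ab) = |\{(b,c,d) \in A^3 : ab = cd\}|,
\]
so that $\sum_{a \in A} T_\times(a) = E_\times(A)$. For a threshold $\tau$ to be optimized later, I would set
\[
C = \{a \in A : T_\times(a) \leq \tau\}, \qquad B = A \setminus C.
\]
The bound $E_\times(C) \leq |C|\tau \leq |A|\tau$ is then immediate from the definition, so the entire proof reduces to bounding $E_+(B)$.

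Since every element of $B$ has multiplicative popularity exceeding $\tau$, a dyadic pigeonhole on the level sets $P_k = \{\lambda \in AA : r_{AA}(\lambda) \sim 2^k\}$ isolates a single scale $2^k$ that witnesses this popularity, up to a $\log|A|$ loss. Bounding $E_+(B)$ then reduces to an asymmetric sum-product question for matrices: if a subset of $GL_2(\F_q)$ interacts strongly under multiplication, how much additive structure can it carry? The fact that $M(|A|)$ is a minimum of two quite different shapes, $q^{4/3}/|A|^{1/3}$ and $|A|^{4/5}/q^{13/5}$, strongly suggests that two independent inputs are combined. I expect the first to come from a Pl\"unnecke--Ruzsa / covering-type argument, likely leveraging the trivial bound $|A+A| \leq q^4$ together with a Cauchy--Schwarz step converting the multiplicative popularity into an $E_+$ upper bound of the shape $|B|^{10/3}/q^{4/3}$. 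The second should come from a point-surface incidence estimate in the spirit of Rudnev and Stevens--de Zeeuw, where the exponent $4/5$ is characteristic after an incidence-driven Cauchy--Schwarz reduction. The log powers $2/3$ and $27/10$ appear naturally from the number of dyadic/H\"older steps in each of the two branches.

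Optimizing $\tau$ by balancing the linear bound $|A|\tau$ against the derived bound on $E_+(B)$ in each branch produces $\max\{E_+(B), E_\times(C)\} \ll |A|^3/M(|A|)$, with the $\min$ inside $M(|A|)$ arising from taking whichever of the two incidence inputs is more efficient at the given parameters.

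The principal obstacle is the asymmetric sum-product / incidence input for $GL_2(\F_q)$ itself. Non-commutativity precludes a direct appeal to the classical $\F_q$-incidence machinery, so one has to either encode matrix multiplications as incidences between points and surfaces in $\F_q^4$ and apply a Rudnev-type bound with careful bookkeeping of the determinant strata, or exploit the natural action of $GL_2(\F_q)$ on $\F_q^2$ to convert matrix collisions into point-line incidences in the affine plane. Once such an input is available with the correct exponents---and this is both where the real content lies and where the hypothesis $A \subseteq GL_2(\F_q)$ (rather than merely $M_2(\F_q)$) becomes essential---the rest of the argument (dyadic pigeonhole, Cauchy--Schwarz, threshold optimization) should proceed along classical Balog--Wooley lines.
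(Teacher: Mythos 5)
Your proposal and the paper diverge at two important places, and the central technical input is left as a speculation that does not match what the paper actually uses.

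\textbf{The key input is an expander mixing estimate, not an incidence theorem.} The paper's engine is Proposition~\ref{thm:PPIM2}, which bounds the number $\mathcal{I}$ of solutions to $ab+ef=c+d$ with $a,b,c,d,e,f$ in prescribed subsets of $M_2(\F_q)$, with main term $|A||B||C||D||E||F|/q^4$ and error $q^{13/2}\sqrt{|A||B||C||D||E||F|}$. This is proved by constructing a directed ``sum-product'' digraph on vertex set $M_2(\F_q)^3$ (with an edge from $(a,e,c)$ to $(b,f,d)$ iff $ab+ef=c+d$), showing it is a normal $q^8$-regular digraph on $q^{12}$ vertices with second eigenvalue $\ll q^{13/2}$ via a case analysis on $\rank\begin{pmatrix}\bar a & \bar e\end{pmatrix}$ and $\rank\bar c$, and then invoking Vu's directed expander mixing lemma. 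There is no Pl\"unnecke--Ruzsa covering step, no point-surface incidence theorem in $\F_q^4$, and no use of the $GL_2$ action on $\F_q^2$. Correspondingly, the $\min$ in $M(|A|)$ does not arise from two independent structural inputs: both branches, $q^{4/3}/(|A|^{1/3}(\log|A|)^{2/3})$ and $|A|^{4/5}/(q^{13/5}(\log|A|)^{27/10})$, are downstream of the single Proposition~\ref{thm:PPIM2}, the first from its main term and the second from its $q^{13/2}$ error term, passed through Lemma~\ref{lem:EnEnlgsbipoly} and the final optimization in $M$.

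\textbf{The decomposition is iterative, not a single threshold, and this matters.} You set $C=\{a:T_\times(a)\le\tau\}$ and $B=A\setminus C$ and claim $E_\times(C)\le|A|\tau$; that part is fine. But you then need $E_+(B)\ll|A|^3/M(|A|)$ for the entire popular set $B$, and the dyadic pigeonhole you invoke is not enough to deliver that directly. What the paper actually does is run the Balog--Wooley/Rudnev--Shkredov--Stevens iteration: starting from $S_1=A$, while $E_\times(S_i)>|A|^3/M$ it uses the pigeonholing Lemma~\ref{lem:EnergyEnergyPigeonh} (which, crucially, uses $X\subseteq GL_2(\F_q)$ to rewrite popular products as $x=d\,y^{-1}$ or $x=y^{-1}d$, hence the two cases \eqref{eqn:DARepLB}/\eqref{eqn:DARepLB2} reflecting non-commutativity) to extract a subset $V_i=X_*\subseteq S_i$ of guaranteed size $|V_i|\gg E_\times(S_i)^{1/2}/(|S_i|^{1/2}(\log|S_i|)^{7/4})$ with $E_+(V_i)$ controlled by Proposition~\ref{thm:PPIM2} applied to $\mathcal{I}(X^{-1},D,-X_*,-X^{-1},D,X_*)$; it then passes to $S_{i+1}=S_i\setminus V_i$. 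The final additive energy bound on $C=\bigsqcup_i V_i$ is assembled via the subadditivity $E_+(\bigsqcup V_i)\le\bigl(\sum_i E_+(V_i)^{1/4}\bigr)^4$ of Lemma~\ref{lem:ESubadd}. Your single-threshold scheme has no mechanism playing the role of this extraction-and-subadditivity step, which is precisely where the logarithmic losses $(\log|A|)^{2/3}$ and $(\log|A|)^{27/10}$ are generated and where the bound on $E_+$ of the ``multiplicatively rich'' side actually comes from. As written, bounding $E_+(B)$ for the whole popular set in one stroke is a genuine gap.
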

It follows from this theorem that for any set $A$ of matrices in $M_2(\mathbb{F}_q)$, we always can find a subset with either small additive energy or small multiplicative energy. In the setting of finite fields, such a result has many applications in studying exponential sums and other topics, for instance, see \cite{MohSte, pham, RNShWin, Shk16, Shk20, SwaWin, VA} and references therein. By the Cauchy-Schwarz inequality, we have the following direct consequence  on a sum-product estimate, namely, for $A\subseteq GL_2(\mathbb{F}_q)$, we have 
\begin{equation}\label{eqn:EDSP}
\max \left\lbrace |A+A|, |AA| \right\rbrace \gg |A|\cdot M(|A|).
\end{equation}
By a direct computation, one can check that this is better than the estimate (\ref{eqn:SRB2}) in the range $|A|\ll q^{3+5/8}/(\log |A|)^{1/2}$. 

In the next theorem, we show that the lower bound of \eqref{eqn:EDSP} can be improved by a direct energy estimate.
\begin{theorem}
\label{thm:AddEng}
Let $A, B\subseteq M_2(\F_q)$ and $C\subseteq GL_2(\F_q)$. Then
\[
E_+(A, B) \ll \frac{|A|^2|BC|^2}{q^4} + q^{13/2}\frac{|A||BC|}{|C|}.
\]
\end{theorem}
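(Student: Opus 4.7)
My plan is to combine a multiplicative shift in the $B$-variable with a Plancherel-type Fourier analysis on $M_2(\F_q) \cong \F_q^4$ exploiting the right-action of $GL_2(\F_q)$. Unfolding the energy as
\[
E_+(A,B) = \sum_{d \in M_2(\F_q)} r_{A-A}(d)\, r_{B-B}(d),
\]
I use the fact that for any $c \in C$, the map $x \mapsto xc$ is a bijection of $M_2(\F_q)$: if $b_1 - b_2 = d$, then $b_1 c - b_2 c = dc$ with $b_1 c, b_2 c \in Bc \subseteq BC$. Writing $D := BC$ and dropping the restriction $b_i c \in Bc$, this gives $|C|\cdot r_{B-B}(d) \leq \sum_{c \in C} r_{D-D}(dc)$. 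Summing against $r_{A-A}(d)$ then yields $|C|\cdot E_+(A,B) \leq T$, where
\[
T = \bigl|\{(a_1,a_2,c,e_1,e_2) \in A^2 \times C \times D^2 : (a_1-a_2)c = e_1 - e_2\}\bigr|.
\]

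Next, rearranging the constraint $(a_1-a_2)c = e_1-e_2$ as $a_1 c + e_2 = a_2 c + e_1$, for each fixed $c$ the inner count is the additive energy $E_+(Ac, D)$, and so $T = \sum_{c \in C} E_+(Ac, D)$. Applying Plancherel on $\F_q^4$ with the trace pairing and the identity $\widehat{Ac}(\xi) = \hat{A}(c\xi)$ (which follows from the cyclicity of trace), I decompose
\[
T = \frac{|A|^2 |C| |D|^2}{q^4} + \frac{1}{q^4}\sum_{\xi \neq 0} |\hat{D}(\xi)|^2 \sum_{c \in C} |\hat{A}(c\xi)|^2.
\]
The main term, upon dividing by $|C|$ at the end, gives the $|A|^2|BC|^2/q^4$ contribution of the theorem, so the task reduces to bounding the Fourier error by $O(q^{13/2}|A||D|)$.

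This last step is the main obstacle, requiring a careful analysis of the mixing of characters under the non-abelian action $\xi \mapsto c\xi$. I would split $\xi \neq 0$ into invertible and singular (rank-$1$) parts. For invertible $\xi$ the orbit $\{c\xi : c \in C\}$ is in bijection with $C$, and a Cauchy--Schwarz step reduces the contribution to a point-hyperplane type incidence count in $\F_q^4$ involving the difference set $A-A$, the set $C$, and the set $D$. For rank-$1$ $\xi$ (of which there are only $O(q^3)$), the image of $c\xi$ is confined to a two-dimensional subvariety and must be handled by a separate but similar estimate. In both cases the $q^{13/2}$ loss should emerge from a Vinh-type point-hyperplane incidence bound in $\F_q^4$ combined with the order $|GL_2(\F_q)| \sim q^4$, in the spirit of the matrix expander estimates of \cite{KaKoPhShVi}. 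Dividing the resulting bound on $T$ by $|C|$ then completes the proof.
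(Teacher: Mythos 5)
Your opening reduction is correct: the multiplicative shift giving $|C|\,r_{B-B}(d) \le \sum_{c\in C} r_{D-D}(dc)$ with $D=BC$, hence $|C|\,E_+(A,B) \le T = \sum_{c\in C} E_+(Ac,D)$, and the Plancherel unfolding of $T$ into a main term $|A|^2|C||D|^2/q^4$ plus a Fourier error term are all fine. But the argument stops exactly where the difficulty lies. You explicitly flag the bound on the Fourier error as ``the main obstacle'' and then only sketch a plan: split $\xi$ by rank and invoke a ``Vinh-type point--hyperplane incidence bound.'' No such bound is stated, its connection to $\sum_{c\in C}|\hat{A}(c\xi)|^2$ is not explained, and the exponent $q^{13/2}$ does not emerge from anything written. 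As it stands this is a gap, not a proof. For what it is worth, the missing step can be closed by elementary means: for invertible $\xi$ the map $c\mapsto c\xi$ is injective on $C\subseteq GL_2(\F_q)$, so $\sum_{c\in C}|\hat{A}(c\xi)|^2 \le \sum_\eta|\hat{A}(\eta)|^2 = q^4|A|$ by Plancherel; for rank-one $\xi$ the fibres of $c\mapsto c\xi$ have size $O(q^2)$, giving $\sum_{c\in C}|\hat{A}(c\xi)|^2\le q^2\sum_\eta|\hat{A}(\eta)|^2 = O(q^6|A|)$; combining these yields an error $O(q^6|A||BC|/|C|)$, in fact sharper than the stated $q^{13/2}$ term. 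But none of this calculation appears in your proposal, and the route you describe (Cauchy--Schwarz into an incidence count) is not clearly the right tool.

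The paper's proof sidesteps any fresh Fourier computation. It inserts the identity $cc^{-1}$ \emph{twice}, once for $b$ and once for $b'$, rewriting
\[
E_+(A,B) = |C|^{-2}\bigl|\{(a,a',b,b',c,c')\in A^2\times B^2\times C^2 : a + (bc)c^{-1} = a' + (b'c')(c')^{-1}\}\bigr|,
\]
then enlarges this to the count of $(a,a',s,s',c,c')\in A^2\times(BC)^2\times(C^{-1})^2$ with $a+sc=a'+s'c'$. Because the two inner $c$-variables are now \emph{independent}, this is precisely a count of the form $\mathcal{I}(\cdot)$ treated by Proposition~\ref{thm:PPIM2}, which is applied off the shelf. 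Your single-shift reduction forces the two occurrences of $c$ to coincide, so Proposition~\ref{thm:PPIM2} no longer applies and you are left to redo the spectral/Fourier estimate from scratch --- which is exactly the step you have left open.
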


\begin{corollary}\label{sum-productestiamte}
For $A\subseteq M_2(\F_q)$, with $|A|\gg q^{3}$, we have
\begin{equation}
\label{eqn:SRB1}
\max\{\,|A+A|,\, |AA|\,\}\gg \min\left\{\,\frac{|A|^2}{q^{13/4}},\, q^{4/3}|A|^{2/3}\,\right\}.
\end{equation}
In addition, if $|AA|\ll |A|$ and $|A|\gg q^{3+\frac{1}{2}}$, then \begin{equation}\label{eqq99}|A+A|\gg q^4.\end{equation}
\end{corollary}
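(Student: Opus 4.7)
The plan is to apply Theorem~\ref{thm:AddEng} in a judicious way and then combine its output with the Cauchy--Schwarz lower bound $E_+(A)|A+A|\geq |A|^4$ from (\ref{eqn:AddECS}).

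Since the number of singular $2\times 2$ matrices over $\F_q$ is $O(q^3)$, the assumption $|A|\gg q^3$ (with a sufficiently large implicit constant) lets us set $C:=A\cap GL_2(\F_q)\subseteq A$ while retaining $|C|\sim |A|$. I would then invoke Theorem~\ref{thm:AddEng} with both of its first two arguments equal to $A$ and with this choice of $C$. Since $C\subseteq A$, we have $AC\subseteq AA$, hence $|AC|\leq |AA|$. Substituting $|C|\sim |A|$ into the conclusion of the theorem yields
\[
E_+(A)\ll \frac{|A|^2|AA|^2}{q^4}+q^{13/2}|AA|.
\]

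For (\ref{eqn:SRB1}), write $K=\max\{|A+A|,|AA|\}$. Combining $|A|^4\leq E_+(A)|A+A|$ with the display above gives
\[
|A|^4\ll \frac{|A|^2 K^3}{q^4}+q^{13/2}K^2,
\]
and whichever of the two right-hand terms dominates produces one of the two bounds $K\gg |A|^{2/3}q^{4/3}$ or $K\gg |A|^2/q^{13/4}$ appearing in the stated minimum.

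For (\ref{eqq99}), the assumption $|AA|\ll |A|$ collapses the energy bound to $E_+(A)\ll |A|^4/q^4+q^{13/2}|A|$. The threshold $|A|\gg q^{3+1/2}$ is precisely the condition $|A|^3\gg q^{21/2}$ which makes the first term dominate, so $E_+(A)\ll |A|^4/q^4$, and a final application of Cauchy--Schwarz delivers $|A+A|\gg q^4$. I do not foresee any substantial obstacle; once Theorem~\ref{thm:AddEng} is available, both parts reduce to algebraic manipulation together with a standard Cauchy--Schwarz step, the only minor point being the opening reduction to an invertible subset $C$ of $A$ that permits replacing $|C|$ by $|A|$ in the second term.
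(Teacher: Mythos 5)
Your proposal is correct and follows essentially the same route as the paper: invoke Theorem~\ref{thm:AddEng} with $A=B$ and $C$ (the paper simply assumes $A\subseteq GL_2(\F_q)$, you more explicitly pass to $C=A\cap GL_2(\F_q)$ with $|C|\sim|A|$, which is an equally valid and in fact slightly more careful reduction), then combine with the Cauchy--Schwarz bound $E_+(A)\,|A+A|\geq|A|^4$ from \eqref{eqn:AddECS} and perform the same algebraic case analysis for both \eqref{eqn:SRB1} and \eqref{eqq99}. The threshold computation showing $|A|\gg q^{3+1/2}$ forces the first term of the energy bound to dominate matches the paper's derivation of $|A+A|\gg\min\{q^4,|A|^3/q^{13/2}\}$.
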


We point out that the estimate \eqref{eqn:SRB1} improves \eqref{eqn:SRB2} in the range $|A|\ll q^{3+5/8}$ and is strictly stronger than \eqref{eqn:EDSP}. We also note that our assumption to get the estimate (\ref{eqq99}) is reasonable. For instance, let $G$ be a subgroup of $\mathbb{F}_q^*$, and $A$ be the set of matrices with determinants in $G$, then we have $|A|\sim q^3\cdot |G|$ and $|AA|=|A|$. 

It has been proved in \cite[Theorems 1.8 and 1.9]{KaKoPhShVi} that for $A, B, C\subseteq M_2(\mathbb{F}_q)$, if $|A||B||C|\ge q^{11}$, then we have 
\[|AB+C|, ~|(A+B)C|\gg q^4.\]
In the following theorem, we provide improvements of these results. 
\begin{theorem}
\label{thm:3vexp}
Let $A, B, C \subseteq M_2(\F_q)$, we have 
\[|AB+C|\gg \min \left\lbrace\, q^4,\, \frac{|A||B||C|}{q^{13/2}} \,\right\rbrace.\]
If $C\subseteq GL_2(\mathbb{F}_q)$, the same conclusion holds for $(A+B)C$, i.e. 
\[|(A+B)C|\gg \min \left\lbrace\, q^4,\, \frac{|A||B||C|}{q^{13/2}} \,\right\rbrace.\]
In particular, 
\begin{enumerate}
\item if  $|A||B||C|\gg q^{10 + 1/2}$, then 
$|AB + C|\gg q^4.$
\item if $|A||B||C|\gg q^{10 + 1/2}$ and $C\subseteq GL_2(\mathbb{F}_q)$, then 
$|(A+B)C|\gg q^4.$
\end{enumerate}
\end{theorem}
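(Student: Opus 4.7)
\bigskip
\textbf{Proof proposal.} The plan is to reduce both bounds to an application of Theorem~\ref{thm:AddEng} via Cauchy--Schwarz, treating the two products in parallel.

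For $|AB+C|$ with $A,B,C\subseteq M_2(\F_q)$: Set $N(x)=|\{(a,b,c)\in A\times B\times C:\,ab+c=x\}|$, so $\sum_x N(x)=|A||B||C|$ and Cauchy--Schwarz yields
\[
|AB+C|\cdot T\ge (|A||B||C|)^2,\qquad T=\sum_x N(x)^2.
\]
The quantity $T$ counts sextuples $(a_1,b_1,c_1,a_2,b_2,c_2)$ satisfying $a_1b_1+c_1=a_2b_2+c_2$. Viewing $N=r_{AB}\ast 1_C$ as an additive convolution on $M_2(\F_q)\cong\F_q^4$ and applying Plancherel,
\[
T=\tfrac{1}{q^4}\Big((|A||B||C|)^2+\sum_{\xi\neq 0}|\widehat{r_{AB}}(\xi)|^2\,|\widehat{1_C}(\xi)|^2\Big).
\]
The crux is a bilinear character sum bound $|\widehat{r_{AB}}(\xi)|\ll q^{13/4}\sqrt{|A||B|}$ for $\xi\neq 0$, which I would prove by Cauchy--Schwarz followed by Parseval, split according to the rank of $\xi$: when $\xi$ has rank $2$ one gets $q^2\sqrt{|A||B|}$ directly, and when $\xi$ has rank $1$ one gets $q^3\sqrt{|A||B|}$ using that the linear map $a\mapsto \xi^{T}a$ has fibres of size $q^2$ on $M_2(\F_q)$. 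Since $q^3\le q^{13/4}$, this bound is uniform. Combined with the Parseval identity $\sum_\xi|\widehat{1_C}(\xi)|^2=q^4|C|$, the off-diagonal contribution to $T$ is $\ll q^{13/2}|A||B||C|$, and the first claim follows.

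For $|(A+B)C|$ with $C\subseteq GL_2(\F_q)$: Let $M(x)=|\{(a,b,c):(a+b)c=x\}|$, so $|(A+B)C|\cdot T'\ge (|A||B||C|)^2$ with $T'=\sum_x M(x)^2$. Unfolding,
\[
T'=\sum_{c_1,c_2\in C}\sum_x r_{A+B}(x)\,r_{A+B}(xc_1c_2^{-1}).
\]
For each $(c_1,c_2)$, setting $h=c_1c_2^{-1}\in GL_2(\F_q)$ and using that $x\mapsto xh$ is a bijection of $M_2$, Cauchy--Schwarz in $x$ bounds the inner sum by $E_{+}(A,B)$; hence $T'\le|C|^2 E_{+}(A,B)$, and so $|(A+B)C|\ge(|A||B|)^2/E_{+}(A,B)$. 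Finally, I would apply Theorem~\ref{thm:AddEng} with the given $C$ as the auxiliary multiplicative set and perform a case analysis on which of $|A|^2|BC|^2/q^4$ or $q^{13/2}|A||BC|/|C|$ dominates, using the trivial bound $|BC|\le\min\{q^4,|B||C|\}$; this yields $\min\{q^4,\,|A||B||C|/q^{13/2}\}$ in each regime.

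The main obstacles are the bilinear character sum estimate of Part~1 for rank-$1$ frequencies $\xi$---where the naive Cauchy--Schwarz is only just enough, in the sense that $q^3\le q^{13/4}$---and the case analysis of Part~2, which requires careful bookkeeping in order to keep the correct factor of $|C|$ on the right-hand side when extracting the exponent $q^{13/2}$ from Theorem~\ref{thm:AddEng}.
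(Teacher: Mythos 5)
Your argument for $|AB+C|$ is correct and genuinely different from the paper's. The paper identifies $\sum_\lambda t(\lambda)^2=\mathcal{I}(A,B,-C,-A,B,C)$ and applies Proposition~\ref{thm:PPIM2}, whose $q^{13/2}$ comes from the spectral analysis of the sum-product digraph. Your Plancherel expansion of $\sum_x N(x)^2$ together with the completed Cauchy--Schwarz estimate for $\widehat{r_{AB}}(\xi)$, stratified by $\rank(\xi)$, is a clean Fourier-analytic substitute and proves the claimed bound. In fact the rank-$1$ estimate $|\widehat{r_{AB}}(\xi)|\ll q^{3}\sqrt{|A||B|}$ that you actually derive (from the $q^2$-dimensional left annihilator $\{v:v\xi=0\}$) is stronger than the $q^{13/4}$ you then settle for; pushed through it gives $|AB+C|\gg\min\{q^4,|A||B||C|/q^{6}\}$, i.e.\ the conjecturally sharp threshold $q^{10}$, which suggests that the operator-norm bound $\mu\ll q^{13/2}$ in Proposition~\ref{thm: sum-product-graph} is lossy for this application.

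For $|(A+B)C|$ there is a genuine gap, located at the step $T'\le|C|^2E_+(A,B)$. Cauchy--Schwarz applied to $\sum_y r_{A+B}(y)\,r_{A+B}(yh)$ discards all dependence on $h=c_1c_2^{-1}$, so the $|C|^2$ in front is immediately divided away and you are left with $|(A+B)C|\ge|A|^2|B|^2/E_+(A,B)$. By \eqref{eqn:AddECS} this right-hand side is at most $|A+B|$, which carries no $|C|$-dependence whatsoever; consequently no application of Theorem~\ref{thm:AddEng}, with any auxiliary multiplicative set, can recover a lower bound of the shape $|A||B||C|/q^{13/2}$ from it. With your choice (auxiliary set $=C$ and $|BC|\le\min\{q^4,|B||C|\}$), the denominator carries $|BC|$ to a positive power, so in the worst case $|BC|\sim q^4$ one obtains only $|(A+B)C|\gg\min\{|B|^2/q^4,\,|A||B|^2|C|/q^{21/2}\}$, which can be $O(1)$. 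The paper avoids the detour through $E_+(A,B)$ entirely: it counts the sextuples $(a,b,c,a',b',c')$ with $(a+b)c=(a'+b')c'$ directly, by running Lemma~\ref{edge} on a second digraph whose edge relation is $ba+ef=c+d$, and observing that the constraint encodes an edge from $(c,-b',-ac)$ to $(b,c',a'c')$; since $C\subseteq GL_2(\F_q)$, both vertex sets have size exactly $|A||B||C|$, and the mixing lemma returns $T'\ll|A|^2|B|^2|C|^2/q^4+q^{13/2}|A||B||C|$ with the single power of $|C|$ intact. You would need a similarly global count of these collisions (via such a modified digraph, or via a Fourier identity tailored to the map $y\mapsto yh$) rather than the two-step route through $E_+(A,B)$.
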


The condition $C\subseteq GL_2(\mathbb{F}_q)$ is necessary, since, for instance, one can take $C$ to be the set of matrices with zero determinant and $A=B=M_2(\mathbb{F}_q)$, then $|(A+B)C|\sim q^3$ and $|A||B||C|\sim q^{11}$. 

We expect that the exponent $q^{10 + 1/2}$, in the final conclusions of the above theorem, could be further improved to $q^{10}$, which, as we shall demonstrate, is sharp. For $AB+C$, let $A$ and $B$ be the set of lower triangular matrices in $M_2(\F_q)$ and for arbitrary $0<\delta<1$, let $X\subseteq \F_q$ be any set with $|X|= q^{1-\delta}$ and let
\[
C = \left\{\,\begin{pmatrix} c_1 & c_2 \\ c_3 & c_4 \end{pmatrix}:c_1, c_3, c_4\in \F_q, c_2\in X\,\right\}.
\]
Then $|A||B||C| = q^{10-\delta}$ and $|AB+C| = |C| = q^{4-\delta}$. 

For $(A+B)C$, the construction is as follows:
For arbitrary $k$, let $q=p^k$ and let $V$ denote a $(k-1)$-dimensional vector space over $\F_p$ in $\F_q$. Thus, we have $|V| = p^{k-1} = q^{1-1/k}$. Now let 
\[
A = B = \left\{\,\begin{pmatrix} x_1 & x_2 \\ x_3 & x_4 \end{pmatrix}:x_1, x_2\in V, x_3, x_4\in \F_q\,\right\}
\]
and
\[
C = \left\{\,\begin{pmatrix} c_1 & c_2 \\ c_3 & c_4 \end{pmatrix}:c_1, c_3\in \F_q, c_2, c_4\in \F_p\,\right\}.
\]
Note that
\[
(A+B)C = AC =  \left\{\,\begin{pmatrix} y_1 & y_2 \\ y_3 & y_4 \end{pmatrix}:y_1, y_3, y_4\in \F_q, y_2\in V\,\right\},
\]
where we have used that $V\cdot \F_p + V\cdot \F_p = V+V = V.$

Thus, $|A||B||C| = (q^2\cdot q^{2-2/k})^2 \cdot (q^2\cdot q^{2/k}) = q^{10-2/k}$ while $|(A+B)C| = q^{4-1/k}$.



It is worth noting that in the setting of finite fields, our approach and that of Karabulut et al. in \cite{KaKoPhShVi} imply the same result. Namely, for $A,B,C \subseteq \Fq$, we have $|(A+B)C|, |AB+C| \gg q$ whenever $|A||B||C| \gg q^2$. Thus, there exists a different phenomenon between problems over finite fields and over the matrix ring $M_2(\Fq)$.

Let $A, B, C, D\subseteq M_2(\F_q)$, our last theorem is devoted to the solvability of the equation 
\[x+y=zt, ~x\in A, y\in B, z\in C, t\in D.\]
Let $\mathcal{J}$ denote the number of solutions to this equation. 

One can check that by using Theorem 4.2 and Lemma 4.1 from \cite{KaKoPhShVi}, one has 
\begin{equation}\label{eqcompare}\left\vert \mathcal{J}-\frac{|A||B||C||D|}{q^4} \right\vert\ll q^{7/2}(|A||B||C||D|)^{1/2}.\end{equation}
Thus, when $|A||B||C||D|\gg q^{15}$, then $\mathcal{J}\sim \frac{|A||B||C||D|}{q^4}$. We refer the interested reader to \cite{Sar} for such a result over finite fields. In our last theorem, we are interested in bounding $\mathcal{J}$ from above when $|A||B||C||D|$ is smaller. 
\begin{theorem}
\label{thm:SarkM2v2}
Let $A, B, C, D\subseteq M_2(\F_q)$ and let $\mathcal{J}$ denote the number of solutions to the equation
\begin{equation*}
a+b = c d, \quad (a, b, c, d)\in A\times B\times C\times D.
\end{equation*}
Then, we have
\begin{equation*}
    \mathcal{J} \ll  \frac{|A||B|^{1/2}|C||D|}{q^2} + q^{13/4} (|A||B||C||D|)^{1/2}.
\end{equation*}
\end{theorem}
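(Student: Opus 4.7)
The plan is to isolate $B$ with a single Cauchy--Schwarz, reducing Theorem~\ref{thm:SarkM2v2} to a $6$-variable additive energy estimate that I would establish by Fourier analysis on $M_2(\F_q)\cong\F_q^4$.

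Setting $r_{CD-A}(b):=|\{(c,d,a)\in C\times D\times A:\ cd-a=b\}|$, we have $\mathcal{J}=\sum_{b}\mathbf{1}_B(b)\,r_{CD-A}(b)$, and Cauchy--Schwarz in $b$ yields
\[
\mathcal{J}^2\le |B|\sum_{b}r_{CD-A}(b)^2 = |B|\cdot T,\qquad T:=|\{(c,d,a,c',d',a')\in C^2\times D^2\times A^2:\ cd-a=c'd'-a'\}|.
\]
It thus suffices to prove $T\ll \frac{|A|^2|C|^2|D|^2}{q^4}+q^{13/2}|A||C||D|$, after which $\mathcal{J}\le |B|^{1/2}T^{1/2}$ combined with $\sqrt{x+y}\le\sqrt{x}+\sqrt{y}$ recovers the two claimed terms.

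For the bound on $T$, I would use the additive Fourier transform on $M_2(\F_q)$ with the trace pairing $(t,x)\mapsto\tr(tx)$. Writing $\hat g(t):=\sum_x g(x)\chi(\tr(tx))$ for a nontrivial additive character $\chi$ of $\F_q$, the identity $r_{CD-A}=r_{CD}*\mathbf{1}_{-A}$ together with Parseval gives
\[
T=\frac{1}{q^4}\sum_{t\in M_2(\F_q)}|\widehat{r_{CD}}(t)|^2\,|\hat A(t)|^2,
\]
where $\widehat{r_{CD}}(t)=\sum_{c\in C,\,d\in D}\chi(\tr(tcd))$. The contribution of $t=0$ is exactly the main term $|A|^2|C|^2|D|^2/q^4$. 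For $t\neq 0$, pulling out $\max_{t\neq 0}|\widehat{r_{CD}}(t)|^2$ and applying Parseval $\sum_t|\hat A(t)|^2=q^4|A|$ reduces everything to the character-sum estimate
\[
\max_{t\neq 0}|\widehat{r_{CD}}(t)|^2\ll q^{13/2}\,|C||D|.
\]

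The hardest step will be this character-sum bound. A Cauchy--Schwarz in $c$ gives $|\widehat{r_{CD}}(t)|^2\le |C|\sum_{c\in C}|\hat D(tc)|^2$, after which one has to control the linear map $c\mapsto tc$ on $M_2(\F_q)$. When $\rank(t)=2$ this map is a bijection and Parseval on $\hat D$ immediately delivers the bound with a factor $q^4|D|$; the delicate case is $\rank(t)=1$, where the map has a two-dimensional kernel, each value in its image is attained $q^2$ times, and this enlarges the naive Parseval bound by an extra factor of $q^2$. Stratifying over $\rank(t)$ and combining Parseval with the trivial estimate $|\hat D(s)|\le|D|$ should deliver the required $q^{13/2}|C||D|$. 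An alternative route to the same bound on $T$ is to bypass Fourier entirely and reuse Theorem~\ref{thm:AddEng} itself, decomposing the multiset $CD$ dyadically according to level sets of $r_{CD}$ and summing the resulting energy estimates with the appropriate $\ell^2$ weights.
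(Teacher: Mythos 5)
Your reduction to $T := |\{(c,d,a,c',d',a')\in C^2\times D^2\times A^2:\ cd-a=c'd'-a'\}|$ via a single Cauchy--Schwarz in $b$ is precisely the paper's first step. What you did not spot is that $T$ is already covered by Proposition~\ref{thm:PPIM2}: rewriting the defining equation as $cd + c'(-d') = a + (-a')$ exhibits $T$ as $\mathcal{I}(C, D, A, -A, C, -D)$, and the proposition gives $T\ll |A|^2|C|^2|D|^2/q^4 + q^{13/2}|A||C||D|$ at once. The paper's entire proof is this Cauchy--Schwarz followed by that citation.

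Your Fourier route to the bound on $T$ is a valid and genuinely different alternative, and it in fact delivers a sharper error term than you requested. After extending $\sum_{c\in C}$ to $\sum_{c\in M_2(\F_q)}$ via Cauchy--Schwarz, the observation is that for $\rank(t)=1$ the $\F_q$-linear map $c\mapsto tc$ on $M_2(\F_q)\cong\F_q^4$ has rank $2$, so each point of its image is attained exactly $q^2$ times; Parseval alone (no need for the pointwise bound $|\hat D(s)|\le |D|$) then gives $\sum_{c\in M_2(\F_q)}|\hat D(tc)|^2\le q^2\cdot q^4|D|=q^6|D|$, and the full-rank case yields the even smaller $q^4|D|$. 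Hence $\max_{t\ne 0}|\widehat{r_{CD}}(t)|^2\le q^6|C||D|$, which already beats the $q^{13/2}|C||D|$ you were aiming for (the exponent $13/2$ in Proposition~\ref{thm:PPIM2} comes from a somewhat lossy eigenvalue estimate in Proposition~\ref{thm: sum-product-graph}), and it would improve the theorem's error term from $q^{13/4}(|A||B||C||D|)^{1/2}$ to $q^{3}(|A||B||C||D|)^{1/2}$. The trade-off is purely economy of effort: the paper reuses a lemma it already has and which serves several other results, while your route re-establishes (and in fact strengthens) a special case of it from scratch. The dyadic detour through Theorem~\ref{thm:AddEng} that you mention last is strictly more roundabout than either approach and would cost logarithmic factors, so I would not pursue it.
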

Assuming $|A|=|B|=|C|=|D|$, the upper bound of this theorem is stronger than that of (\ref{eqcompare}) when $|A|\ll q^{11/3}$. 
\paragraph{Organization.} The rest of this paper is structured as follows: In the next section, we prove a preliminary lemma, which is one of the key ingredients in the proof of our energy decomposition theorem. Section $4$ is devoted to the proof of Theorem \ref{thm:BWDM2}. The proofs of Theorem~\ref{thm:AddEng} and Corollary~\ref{sum-productestiamte} will be presented in Section $5$. Section $6$ contains proofs of Theorem~\ref{thm:3vexp}, and Theorem \ref{thm:SarkM2v2}.
\section{A preliminary lemma}
Given sets $A, B, C, D, E, F\subseteq M_2(\F_q)$, let $\mathcal{I}(A, B, C, D, E, F)$ be the number of solutions
\[
(a, b, c, d, e, f)\in A\times B\times C\times D\times E\times F: \quad ab+ ef = c + d.
\]
The main purpose of this section is to prove an estimate for $\mathcal{I}(A, B, C, D, E, F)$, which is one of the key ingredients in the proof of Theorem \ref{thm:BWDM2}.
\begin{proposition}
\label{thm:PPIM2}
We have
\[
\bigg|\mathcal{I}(A, B, C, D, E, F) - \frac{|A||B||C||D||E||F|}{q^4}\bigg| \ll  q^{13/2}\sqrt{|A||B||C||D||E||F|}\,.
\]
\end{proposition}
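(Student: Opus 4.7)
The plan is to carry out Fourier analysis on the additive group $(M_2(\mathbb{F}_q),+)\cong\mathbb{F}_q^{4}$, whose dual is indexed by $M\in M_2(\mathbb{F}_q)$ via the non-degenerate pairing $(X,M)\mapsto\chi(\mathrm{Tr}(MX))$. Using orthogonality to encode the equation $ab+ef-c-d=0$, one has
\[
\mathcal{I}\;=\;\frac{1}{q^{4}}\sum_{M\in M_2(\mathbb{F}_q)} \overline{\widehat{1_C}(M)}\,\overline{\widehat{1_D}(M)}\,T_{A,B}(M)\,T_{E,F}(M),
\]
where $\widehat{1_X}(M):=\sum_{x\in X}\chi(\mathrm{Tr}(Mx))$ and $T_{X,Y}(M):=\sum_{(x,y)\in X\times Y}\chi(\mathrm{Tr}(Mxy))$. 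The contribution from $M=0$ is exactly the main term $|A||B||C||D||E||F|/q^{4}$, so the whole problem reduces to bounding
\[
\mathrm{Err}\;=\;\frac{1}{q^{4}}\sum_{M\neq 0}\overline{\widehat{1_C}(M)}\,\overline{\widehat{1_D}(M)}\,T_{A,B}(M)\,T_{E,F}(M).
\]

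The key preliminary estimate would be a pointwise bound on $T_{X,Y}(M)$ for $M\neq 0$. One Cauchy--Schwarz in $y$ (with the summation extended from $Y$ to $M_2(\mathbb{F}_q)$) followed by additive orthogonality gives
\[
|T_{X,Y}(M)|^{2}\;\leq\; |Y|\,q^{4}\,\#\{(x_1,x_2)\in X^{2}:\,M(x_1-x_2)=0\}\;\leq\; q^{4}|X||Y|\cdot|\ker(L_M)|,
\]
where $L_M:a\mapsto Ma$ is left multiplication on $M_2(\mathbb{F}_q)$. Since $L_M$ decomposes as the direct sum of two copies of $M$ acting on columns, $\mathrm{rank}(L_M)=2\,\mathrm{rank}(M)$, so $|\ker(L_M)|=1$ when $M\in GL_2(\mathbb{F}_q)$ and $|\ker(L_M)|=q^{2}$ when $M$ has rank one. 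Consequently $|T_{X,Y}(M)|\leq q^{2}\sqrt{|X||Y|}$ for invertible $M$, and $|T_{X,Y}(M)|\leq q^{3}\sqrt{|X||Y|}$ for non-zero singular $M$.

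To assemble, I would split $\mathrm{Err}=\mathrm{Err}_{\mathrm{inv}}+\mathrm{Err}_{\mathrm{sing}}$ according to whether $M$ is invertible, insert the appropriate uniform bound on $|T_{A,B}(M)\,T_{E,F}(M)|$ in each piece, and estimate the remaining sum $\sum|\widehat{1_C}(M)||\widehat{1_D}(M)|$ by Cauchy--Schwarz together with Parseval $\sum_M|\widehat{1_X}(M)|^{2}=q^{4}|X|$. This produces
\[
|\mathrm{Err}_{\mathrm{inv}}|\;\ll\; q^{-4}\cdot q^{4}\sqrt{|A||B||E||F|}\cdot q^{4}\sqrt{|C||D|}\;=\;q^{4}\sqrt{|A||B||C||D||E||F|},
\]
and, analogously, $|\mathrm{Err}_{\mathrm{sing}}|\ll q^{6}\sqrt{|A||B||C||D||E||F|}$. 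Adding the two contributions and using $q^{6}\leq q^{13/2}$ for $q\geq 1$ yields the claim.

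The step I expect to be the main obstacle is the pointwise control of $T_{X,Y}(M)$ over the singular matrices, where $|\ker(L_M)|=q^{2}$ is large and no cancellation is available from the trivial Cauchy--Schwarz argument; it is this singular piece that ultimately dictates the final exponent. A sharper bound would require exploiting the "thinness" of the $\sim q^{3}$ non-zero singular matrices against the masses $|\widehat{1_C}(M)|^{2}$ and $|\widehat{1_D}(M)|^{2}$, but such a refinement is not needed for the stated estimate, which the rank-split already accommodates.
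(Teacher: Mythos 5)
Your argument is correct and follows a genuinely different route from the paper. The paper realises $\mathcal{I}$ as the number of directed edges between the vertex sets $A\times E\times C$ and $B\times F\times D$ of an auxiliary sum--product digraph on $M_2(\F_q)^3$, checks that this digraph is normal and $q^8$-regular on $q^{12}$ vertices, bounds its second eigenvalue by $\ll q^{13/2}$ through a case-by-case analysis of $m_G m_G^T$, and then applies Vu's expander-mixing lemma for digraphs. You instead work by Fourier analysis on the additive group $(M_2(\F_q),+)\cong\F_q^4$ via the non-degenerate trace pairing. Your orthogonality expansion of $\mathcal{I}$, the Cauchy--Schwarz estimate $|T_{X,Y}(M)|^2\le q^4|X||Y|\cdot|\ker L_M|$, and the rank stratification $|\ker L_M|=|\ker M|^2\in\{1,q^2\}$ for $M\neq 0$ are all correct, and assembling with Parseval on the $C,D$ factors gives the stated estimate. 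In fact, as you compute, your method yields the sharper bound $q^6\sqrt{|A||B||C||D||E||F|}$: the Fourier argument exploits the rectangular product structure of the two relevant vertex sets, whereas the spectral approach must bound the mixing uniformly over arbitrary vertex subsets and controls each eigenvalue $\kappa_{ij}$ crudely by the degree of the associated regular graph. Propagating your $q^6$ through the remainder of the paper would lower the threshold in the moderate-expander results (Theorem 2.4) from $q^{10+1/2}$ to $q^{10}$, which the paper's own constructions show is optimal; this is a genuine improvement worth pointing out to the authors.
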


To prove Proposition~\ref{thm:PPIM2}, we define the sum-product digraph $G=(V, E)$ with the vertex set $V=M_2(\mathbb{F}_q)\times M_2(\mathbb{F}_q)\times M_2(\mathbb{F}_q)$, such that there is a directed edge going from $(a, e, c)$ to $(b, f, d)$ if and only if $ab+ ef = c+d$. The setting of this digraph is a generalization of that in \cite[Section~4.1]{KaKoPhShVi}

Let $G$ be a digraph on $n$ vertices. Suppose that $G$ is regular of degree $d$, i.e. the in-degree and out-degree of each vertex are equal to $d$.

Let $m_G$ be the adjacency matrix of $G$, where $M_{ij}=1$ if and only if there is a directed edge from $i$ to $j$. Let $\mu_1=d, \mu_2, \ldots, \mu_n$ be the eigenvalues of $m_G$. Notice that these eigenvalues can be complex numbers, and for all $2\le i\le n$, we have $|\mu_i|\le d$. Define $\mu(G):=\max_{|\mu_i|\ne d}|\mu_i|$. This value is referred to as the second largest eigenvalue of $m_G$.  

A digraph $G$ is called an $(n, d, \mu)$-digraph if $G$ is a $d$-regular graph of $n$ vertices, and the second largest eigenvalue is at most $\mu$.

We recall the following lemma from \cite{vu} on the distribution of edges between two vertex sets on an $(n, d, \mu)$-digraph.

\begin{lemma}\label{edge}
  Let $G = (V, E)$ be an $(n, d, \mu)$-digraph. For any two sets $B, C
  \subseteq V$, the number of directed edges from $B$ to $C$, denoted by $e(B, C)$ satisfies
  \[ \left| e(B, C) - \frac{d}{n}|B | |C| \right| \leq \mu \sqrt{|B| |C|}\,.\]
\end{lemma}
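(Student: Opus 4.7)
The plan is to rewrite the edge count $e(B,C)$ as a bilinear form in the indicator vectors of $B$ and $C$, then use the spectral structure of $m_G$ to separate out the main term driven by the top eigenvalue $d$ from an error controlled by the spectral gap $\mu$. This is the directed-graph adaptation of the classical expander mixing argument.

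First, I would note that $e(B,C)=\mathbf{1}_B^{T}\,m_G\,\mathbf{1}_C$, where $\mathbf{1}_B,\mathbf{1}_C\in\R^{V}$ are the indicator column vectors. Since $G$ is $d$-regular in both directions, one has $m_G\mathbf{1}=d\mathbf{1}$ and $\mathbf{1}^T m_G=d\mathbf{1}^T$, where $\mathbf{1}$ is the all-ones vector. Decomposing orthogonally
\[
\mathbf{1}_B=\frac{|B|}{n}\mathbf{1}+\mathbf{u},\qquad \mathbf{1}_C=\frac{|C|}{n}\mathbf{1}+\mathbf{v},
\]
with $\mathbf{u},\mathbf{v}\perp\mathbf{1}$, and expanding the bilinear form, the two cross terms vanish because they each contain an inner product of $\mathbf{1}$ with $\mathbf{u}$ or $\mathbf{v}$, leaving the clean identity
\[
e(B,C)=\frac{d}{n}\,|B|\,|C|+\mathbf{u}^T m_G\,\mathbf{v}.
\]

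The Pythagorean identity gives $\|\mathbf{u}\|_2^2=|B|-|B|^2/n\le |B|$ and similarly $\|\mathbf{v}\|_2\le\sqrt{|C|}$, so it remains to show $|\mathbf{u}^T m_G\,\mathbf{v}|\le \mu\,\|\mathbf{u}\|_2\,\|\mathbf{v}\|_2$. The subspace $W=\mathbf{1}^{\perp}$ is invariant under $m_G$, because for any $\mathbf{w}\in W$ we have $\mathbf{1}^T(m_G\mathbf{w})=d\,\mathbf{1}^T\mathbf{w}=0$. The eigenvalues of $m_G|_W$ are exactly $\mu_2,\dots,\mu_n$, each of modulus at most $\mu$, so an operator-norm bound $\|m_G|_W\|\le\mu$ combined with Cauchy--Schwarz yields the desired estimate.

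The main obstacle is that $m_G$ is not symmetric in the directed setting, so a priori the operator norm of $m_G|_W$ can exceed the spectral radius. I would resolve this in the standard way: either by interpreting the hypothesis of Lemma~\ref{edge} as implicitly requiring normality of $m_G$ (the convention used in \cite{vu} for the directed expander mixing lemma), or by directly verifying normality in the Cayley-type digraphs arising in the applications, notably the sum-product digraph underlying Proposition~\ref{thm:PPIM2}. Under normality, the spectral radius and operator norm on $W$ coincide, so $|\mathbf{u}^T m_G\,\mathbf{v}|\le\mu\sqrt{|B||C|}$, and combining with the main-term identity above completes the proof.
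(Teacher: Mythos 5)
The paper does not prove Lemma~\ref{edge}; it is quoted verbatim from Vu \cite{vu}, so there is no in-paper proof to compare against. Your argument is the standard directed expander mixing proof, which is exactly the one used in \cite{vu}, and you correctly flag the point that makes the directed case delicate: for a non-symmetric matrix, the operator norm of $m_G$ restricted to $\mathbf{1}^{\perp}$ can strictly exceed the second-largest eigenvalue modulus, so one must assume $m_G$ is normal. That assumption is indeed the implicit convention in \cite{vu}, and the paper explicitly verifies normality for the relevant sum-product digraph in the proof of Proposition~\ref{thm: sum-product-graph}. One further subtlety worth making explicit: with the paper's definition $\mu(G)=\max_{|\mu_i|\neq d}|\mu_i|$, your step asserting that every eigenvalue of $m_G$ on $\mathbf{1}^{\perp}$ has modulus at most $\mu$ additionally requires that $d$ be a simple eigenvalue and that no other eigenvalue lie on the circle $|z|=d$; this holds when the regular digraph is strongly connected and aperiodic, which covers the applications in the paper, but it is an extra hypothesis rather than a consequence of $d$-regularity alone. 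Granting normality together with that spectral-simplicity condition, your orthogonal decomposition, the vanishing of the cross terms, and the Cauchy--Schwarz bound $|\mathbf{u}^{T} m_G \mathbf{v}| \le \mu\lVert\mathbf{u}\rVert\lVert\mathbf{v}\rVert$ are exactly right and yield the stated inequality.
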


With Lemma \ref{edge} in hand, to prove Proposition \ref{thm:PPIM2}, it is enough to study properties of the sum-product digraph $G$. 

\begin{definition}
	Let $a, b \in M_2(\mathbb{F}_q)$. We say they are equivalent, if whenever the $i$-th row of $a$ is not all-zero, then neither is the $i$-th row of $b$ and vice versa, for $1\leq i \leq 2$.
\end{definition}

\begin{proposition}\label{thm: sum-product-graph}
	The sum product graph G is a $(q^{12},\, q^8,\, c\cdot q^{13/2})$-digraph, for some positive constant $c$.
\end{proposition}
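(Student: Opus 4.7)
The plan is to verify the three parameters of the sum-product digraph in sequence: vertex count, regularity, and spectral gap. The count $n=q^{12}$ is immediate from $V = M_2(\F_q)^3$. For the regularity, I would observe that given any fixed source $(a,e,c)$, the edge-equation $ab+ef = c+d$ determines $d$ uniquely once $(b,f) \in M_2(\F_q)^2$ is chosen, so the out-degree is exactly $q^8$; solving for $c$ instead shows the in-degree is also $q^8$, so $G$ is $q^8$-regular.

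For the spectral estimate, I would work with the Hermitian companion $MM^*$ of the adjacency matrix, since the singular-value gap of $M$ is what Lemma~\ref{edge} actually needs. Eliminating the shared coordinate $d$ between the two edge-equations attached to vertices $(a,e,c)$ and $(a',e',c')$ yields
\[
(MM^*)_{(a,e,c),(a',e',c')} \;=\; N\bigl(a-a',\ e-e',\ c-c'\bigr), \qquad N(A,E,C) \;=\; \big|\{(b,f) \in M_2(\F_q)^2 : Ab + Ef = C\}\big|.
\]
Because the right-hand side depends only on the differences, $MM^*$ is a Cayley matrix on the abelian group $M_2(\F_q)^3$, and its eigenvalues are exactly the Fourier coefficients of $N$ against the additive characters $\chi\bigl(\tr(T_1 A + T_2 E + T_3 C)\bigr)$. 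Proposition~\ref{thm: sum-product-graph} then reduces to the estimate $\max_{(T_1,T_2,T_3)\neq 0} |\widehat N(T_1,T_2,T_3)| \ll q^{13}$, which gives $\mu(G) \ll q^{13/2}$.

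The heart of the argument is this character-sum bound. Exploiting cyclicity of the trace and carrying out the inner summations over $(b,f)$ first, $\widehat N$ factors as a product $S(T_1,T_3)\,S(T_2,T_3)$ of two independent sums of the shape
\[
S(T, T_3) \;=\; \sum_{X,\, y \,\in\, M_2(\F_q)} \chi\bigl(\tr(T X + T_3 X y)\bigr).
\]
The sum over $y$ collapses to $q^4\cdot \mathbf{1}[T_3 X = 0]$, so $S$ becomes a character sum over the linear subspace $\{X : T_3 X = 0\}$, whose dimension is controlled by $\rank(T_3)$. A short case split on $\rank(T_3) \in \{0,1,2\}$, combined with the alignment of $T$ against $\ker(T_3)$ in each case, produces the required pointwise bound on $\widehat N$.

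The main obstacle is the rank-$1$ stratum, where $T_3 X = 0$ forces both columns of $X$ to lie on a single line and the leftover sum becomes a bilinear character sum governed by a secondary alignment condition between $T$ and $\ker(T_3)$. I expect the row-pattern equivalence of Definition~3.3 to be introduced precisely to streamline this rank-stratification, by giving a clean parameterization of the null spaces of left multiplication by singular matrices — which would otherwise split across several $GL_2(\F_q)$-orbits in $M_2(\F_q)$ and need to be handled one at a time.
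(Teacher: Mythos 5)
Your proposal is correct and takes a genuinely different route from the paper. The paper establishes the spectral bound by an explicit rank stratification: it shows $(m_Gm_G^T)_{uv}$ depends only on the rank of the $2\times 4$ matrix $(\bar a \mid \bar e)$ and on $\bar c$, decomposes $m_Gm_G^T$ as a linear combination of structured $\{0,1\}$-matrices $E_{ij}$ weighted by the common-neighbour counts, bounds each $E_{ij}$ by its regularity degree, and applies the triangle inequality. Your approach instead notices that $(MM^T)_{uv}$ depends only on the difference $u-v$, so $MM^T$ is a convolution (Cayley) operator on the abelian group $M_2(\F_q)^3$, and diagonalizes it exactly by additive characters. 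Using $\tr\bigl(T_3(Ab+Ef)\bigr)=\tr(T_3Ab)+\tr(T_3Ef)$, $\widehat N$ factors as $S(T_1,T_3)S(T_2,T_3)$, and the inner sum over $y$ collapses $S(T,T_3)$ to $q^4\sum_{X\colon T_3X=0}\chi(\tr(TX))$ — this is all correct. Two observations.

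First, carrying your estimate through is actually \emph{sharper} than what the paper proves. When $\rank(T_3)=1$ the kernel $\{X:T_3X=0\}$ has dimension $2$, so $|S(T,T_3)|\le q^{4}\cdot q^{2}=q^{6}$; when $\rank(T_3)=2$ one gets $q^4$; and when $T_3=0$ the coefficient vanishes unless $T_1=T_2=0$. Hence $\max_{(T_1,T_2,T_3)\neq 0}|\widehat N|\le q^{12}$, not merely $q^{13}$ as you conservatively predicted, giving $\mu(G)\ll q^{6}$ — strictly better than the paper's $q^{13/2}$. The loss in the paper comes from the triangle inequality over the $E_{ij}$; exact diagonalization avoids it.

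Second, your instinct that the singular-value gap of $M$ is what Lemma~\ref{edge} really needs is worth emphasising and should be made precise in a full write-up, because $m_G$ is in fact \emph{not} normal: taking $\bar a=\bigl(\begin{smallmatrix}1&0\\0&0\end{smallmatrix}\bigr)$, $\bar e=\bigl(\begin{smallmatrix}0&1\\0&0\end{smallmatrix}\bigr)$, $\bar c=0$ gives $|\{(x,y):\bar a x+\bar e y=0\}|=q^6$ while $|\{(x,y):x\bar a+y\bar e=0\}|=q^4$, since the $2\times4$ matrix $(\bar a\mid\bar e)$ has rank $1$ but the $4\times 2$ matrix $\bigl(\begin{smallmatrix}\bar a\\\bar e\end{smallmatrix}\bigr)$ has rank $2$. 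So one should either invoke the singular-value form of the expander mixing lemma directly (which your $MM^{*}$ computation supplies), or rephrase the proposition in terms of $\sigma_2(M)$. Finally, the closing guess that Definition~3.3 exists to organise the rank stratification in the Fourier argument is off the mark — that equivalence relation is a bookkeeping device for the paper's explicit case analysis of $(\bar a\mid\bar e)$ against $\bar c$ and plays no role in the character-sum route.
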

\begin{proof}
The number of vertices is $|M_2(\mathbb{F}_q)|^3 = q^{12}$. Moreover, for each choice of $(b, f)$, $d$ is determined uniquely from $d = ab + ef - c$. Thus, there are $|M_2(\mathbb{F}_q)|^2 = q^8$ directed edges going out of each vertex. The number of incoming directed edges can be argued in the same way. To conclude, the digraph $G$ is $q^8$-regular. Let $m_G$ denotes the adjacency matrix of $G$. It remains to bound the magnitude of the second largest eigenvalue of the adjacency matrix of $G$, i.e., $\mu(m_G)$.

In the next step, we are going to show that $m_G$ is a normal matrix, i.e. $m_G^Tm_G=m_Gm_G^T$, where $m_G^T$ is the transpose of $m_G$. For a normal matrix $m$, we know that if $\lambda$ is an eigenvalue of $m$, then $|\lambda|^2$ is an eigenvalue of $mm^T$ and $m^Tm$. This comes from the fact that $\overline{\lambda}$ is an eigenvalue of $m^T$. Thus, for a normal matrix $m$, it is enough to give an upper bound for the second largest eigenvalue of $mm^T$ or $m^Tm$. 

There is a simple way to check whenever $G$ is normal or not. For any two vertices $u$ and $v$, let $\mathcal{N}^+(u,v)$ be the set of vertices $w$ such that $\overrightarrow{uw}, \overrightarrow{vw}$ are directed edges, and $\mathcal{N}^-(u, v)$ be the set of vertices $w'$ such that $\overrightarrow{w'u}, \overrightarrow{w'v}$ are directed edges. It is not hard to check that $m_G$ is normal if and only if $|\mathcal{N}^+(u,v)| = |\mathcal{N}^-(u,v)|$ for any two vertices $u$ and $v$. 

Given two vertices $(a, e, c)$ and $(a^\prime, e^\prime, c^\prime)$, where $(a, e, c) \neq (a^\prime, e^\prime, c^\prime)$, the number of $(x,y,z)$ that lies in the common outgoing neighborhood of both vertices is characterized by
	\begin{equation*}
		\begin{rcases}
		a x + e y = c + z\\
		a^\prime x + e^\prime y = c^\prime + z
		\end{rcases}
		\Longleftrightarrow (a - a^\prime)  x + (e - e^\prime)  y = (c - c^\prime)\,.
	\end{equation*}
For each pair $(x, y)$ satisfying this equation, $z$ is determined uniquely. Thus, the problem is reduced to computing the number of such pairs $(x, y)$. 

	For convenience, let $\bar{a} = a - a^\prime$, $\bar{c} = c - c^\prime$ and $\bar{e} = e - e^\prime$. Also, let $t = \begin{pmatrix} \bar{a} & \bar{e}\end{pmatrix}_{2\times 4}$. Then, the above relation is equivalent to 
	\begin{equation} \label{eq: base equation}
		\begin{pmatrix} \bar{a} & \bar{e}\end{pmatrix} \begin{pmatrix} x \\ y\end{pmatrix} =  t  \begin{pmatrix} x \\ y\end{pmatrix}_{4\times 2}= \bar{c}\,.
	\end{equation}
	 We now have the following cases: 
	\begin{itemize}
	\item (\textbf{Case 1}: $\rank(t) = 0$) Note that in this case, we need $a = a^\prime$, $c = c^\prime$ and $e = e^\prime$, which contradicts our assumption that $(a, e, c) \neq (a^\prime, e^\prime, c^\prime)$. Thus, we simply exclude this case.
	\item (\textbf{Case 2}: $\rank(t) = 1$) As $t$ is not an all-zero matrix, there is at least one non-zero row. Without loss of generality, assume it is the first row. Then, $t = \begin{pmatrix} a_1 & a_2 & e_1 & e_2 \\ \alpha a_1 & \alpha a_2 & \alpha e_1 & \alpha e_2\end{pmatrix}$, where $(a_1, a_2, e_1, e_2) \neq \textbf{0}$ and $\alpha \in \mathbb{F}_q$. 
		\begin{itemize}
			\item (\textbf{Case 2.1}: $\rank(\bar{c}) = 2$) In this case, there is no solution, as $\rank\left(t  \begin{pmatrix} x \\ y\end{pmatrix}\right) \leq \rank(t) = 1$ but $\rank(\bar{c}) = 2$.
			\item (\textbf{Case 2.2}: $\rank(\bar{c}) = 1$) Let $x = \begin{pmatrix} x_1 & x_2 \\ x_3 & x_4 \end{pmatrix}$, $y = \begin{pmatrix} y_1 & y_2 \\ y_3 & y_4 \end{pmatrix}$. We discuss two sub-cases: 	
	
	(a) $\bar{c} = \begin{pmatrix} c_1 & c_2 \\ \alpha c_1 & \alpha c_2 \end{pmatrix}$ with the same factor $\alpha$, where $(c_1, c_2) \neq (0, 0)$. In this case, we have the following set of equations:
				\begin{equation*}
					\begin{cases}
					a_1 x_1 + a_2 x_3 + e_1 y_1 + e_2 y_3 = c_1 \\
					a_1 x_2 + a_2 x_4 + e_1 y_2 	+ e_2 y_4 = c_2
				\end{cases}
				\,.
				\end{equation*}
			Since we assume $(a_1, a_2, e_1, e_2) \neq \textbf{0}$, without loss of generality, let $a_1 \neq 0$. Then, 
			\begin{equation*}
				\begin{cases}
				x_1 = (a_1)^{-1}(c_1 -a_2 x_3 -e_1 y_1 - e_2 y_3) \\
				x_2 = (a_1)^{-1}(c_2 - a_2 x_4 - e_1 y_2 - e_2 y_4)
			\end{cases}
			,
			\end{equation*}
			which means that for each $(x_3, y_1, y_3)$ there is a unique $x_1$ and for each $(x_4, y_2, y_4)$ there is a unique $x_2$. Thus, there are $q^6$ different $(x,y,z)$ solutions.
			
			(b) In all other sub-cases, there is no solution. If $\bar{c} = \begin{pmatrix} c_1 & c_2 \\ \beta c_1 & \beta c_2 \end{pmatrix}$, where $\beta \neq \alpha$ and $(c_1, c_2) \neq (0, 0)$, then we get the following two equations:
			\begin{equation*}
					\begin{cases}
					a_1 x_1 + a_2 x_3 + e_1 y_1 + e_2 y_3 = c_1 \\
					\alpha a_1 x_1 + \alpha a_2 x_3 + \alpha e_1 y_1  + \alpha e_2 y_4 = \beta c_1
				\end{cases}\,,
			\end{equation*}
			which obviously do not have any solution.
			
			Otherwise, $\bar{c} = \begin{pmatrix} \beta c_1 & \beta c_2 \\  c_1 & c_2 \end{pmatrix}$ where $(c_1, c_2) \neq (0, 0)$. Note that  if $\alpha \neq 0$, then $\beta \neq \alpha^{-1}$, because this case is covered in Case 2.2(a) implicitly. We get the following equations.
			\begin{equation*}
					\begin{cases}
					a_1 x_1 + a_2 x_3 + e_1 y_1 + e_2 y_3 = \beta c_1 \\
					\alpha a_1 x_1 + \alpha a_2 x_3 + \alpha e_1 y_1 + \alpha e_2 y_3 = c_1
				\end{cases}\,,
			\end{equation*}
			which obviously do not have any solution. Notice that $\alpha = 0$ or $\beta = 0$ corresponds to $t$ and $\bar{c}$ not being equivalent.
			
			\item (\textbf{Case 2.3}: $\rank(\bar{c}) = 0$) This case is similar to the Case 2.2(a), except $c_1 = c_2 = 0$. We have the following two equations:
			\begin{equation*}
				\begin{cases}
					a_1 x_1 + a_2 x_3 + e_1 y_1 + e_2 y_3 = 0 \\
					a_1 x_2 + a_2 x_4 + e_1 y_2 	+ e_2 y_4 =0
				\end{cases}\,.
			\end{equation*}
			Following the same analysis, we conclude there are $q^6$ solutions.
		\end{itemize}
	\item (\textbf{Case 3}: $\rank(t) = 2$) In this case, we always have solutions, for any $\bar{c}$. 
	\begin{itemize}
		\item (\textbf{Case 3.1}: $\rank(\bar{a}) = 2$ or $\rank(\bar{e}) = 2$) In this case, let us look back on equation (\ref{eq: base equation}). If $\rank(\bar{a}) = 2$, then we can rewrite (\ref{eq: base equation}) as $\bar{a}x = \bar{c} - \bar{e} y$. Observe that, for any $y\in M_2(\mathbb{F}_q)$, there is a unique $x$. Thus, the number of solutions is $q^4$. The case where $\rank(\bar{e}) = 2$ is similar.
		\item (\textbf{Case 3.2}: $\rank(\bar{a}) \leq 1$ and $\rank(\bar{e}) \leq 1$) In this case, it is not hard to observe that $T$ must be one of the following four types:
		\begin{enumerate}[label=(\roman*)]
			\setlength\itemsep{1em}
			\item $\begin{pmatrix} a_1 & a_2 & e_1 & e_2 \\ \alpha a_1 & \alpha a_2 &\beta e_1 &\beta e_2\end{pmatrix}$, where $(a_1, a_2), (e_1, e_2)\neq (0,0)$, $\alpha \neq \beta$, $(\alpha, \beta) \neq (0,0)$.
			\item $\begin{pmatrix} \alpha a_1 & \alpha a_2 &\beta e_1 &\beta e_2 \\ a_1 & a_2 & e_1 & e_2  \end{pmatrix}$, where $(a_1, a_2), (e_1, e_2) \neq (0,0)$, $\alpha \neq \beta$, $(\alpha, \beta) \neq (0,0)$.
			\item $\begin{pmatrix}  a_1 & a_2 &0 &0 \\ 0& 0 & e_1 & e_2  \end{pmatrix}$, where $(a_1, a_2),(e_1, e_2) \neq (0,0)$.
			\item $\begin{pmatrix} 0 & 0 & e_1 & e_2 \\ a_1& a_2 & 0 & 0  \end{pmatrix}$, where $(a_1, a_2), (e_1, e_2)\neq (0,0)$.
		\end{enumerate}
		Since $(i)$ and $(ii)$ are symmetric and so is $(iii)$ and $(iv)$, we only argue for $(i)$ and $(iii)$.
		For $(iii)$, reusing notations from Case 2.2(a), we have
		\begin{equation*}
			\begin{cases}
			a_1 x_1 + a_2 x_3 = c_1 \\
			a_1 x_2 + a_2 x_4 = c_2 \\
			e_1 y_1 + e_2 y_3 = c_3 \\
			e_1 y_2 + e_2 y_4 = c_4
		\end{cases}
		.
		\end{equation*}
		
		As $(a_1, a_2) \neq (0,0)$ and $(e_1, e_2) \neq (0,0)$, without loss of generality, we assume $a_1 \neq 0$ and $e_1 \neq 0$. Then, it means for each $(x_3, x_4, y_3, y_4)$ there is a unique $(x_1, x_2, y_1, y_2)$. Thus, the system has $q^4$ solutions.
		
		For $(i)$, we have 
		\begin{equation*}
			\begin{cases}
			a_1 x_1 + a_2 x_3 + e_1 y_1 + e_2 y_3 = c_1 \quad\circled{1}\\
			a_1 x_2 + a_2 x_4 + e_1 y_2 + e_2 y_4 = c_2 \quad\circled{2}\\
			\alpha a_1 x_1 + \alpha a_2 x_3 + \beta e_1 y_1 + \beta e_2 y_3 = c_3 \quad\circled{3}\\
			\alpha a_1 x_2 + \alpha a_2 x_4 + \beta e_1 y_2 + \beta e_2 y_4 = c_4 \quad\circled{4}
		\end{cases}
		.
		\end{equation*}
		Again, assume $a_1 \neq 0$ and $e_1 \neq 0$. Now, take $\circled{1}\times \alpha - \circled{3}$, we get $(\alpha - \beta)(e_1 y_1 + e_2 y_3) = \alpha c_1 - c_3 $. As $\alpha \neq \beta$, this means $e_1 y_1 + e_2 y_3 = (\alpha - \beta)^{-1}(\alpha c_1 - c_3)$. Thus, for each $y_3$, there is a unique $y_1$. Similarly, compute $\circled{1}\times \beta - \circled{3}$, and we get $ a_1 x_1 + a_2 x_3 = (\beta - \alpha)^{-1}(\beta c_1 - c_3)$, which means that for each $x_3$, we get a unique $x_1$. We can do the same for $\circled{2}$ and $\circled{4}$ and conclude that there are $q^4$ solutions.
	\end{itemize}
 	\end{itemize}

	Observe that all cases are disjoint and they together enumerate all possible relations between vertices $(a, e, c)$ and $(a^\prime, e^\prime, c^\prime)$. We computed $\mathcal{N}^+((a, e, c), (a^\prime, e^\prime, c^\prime))$ at above and the computation for $\mathcal{N}^-((a, e, c), (a^\prime, e^\prime, c^\prime))$ is similar. Thus, we know $m_G$ is normal. Note that each entry of $m_Gm_G^T$ can be interpreted as counting the number of common outgoing neighbors between two vertices. We can write $m_Gm_G^T$ as
	\begin{equation*}
	\begin{split}
		m_Gm_G^T &= (q^8 - q^4)I + q^4 J - q^4 E_{21} + (q^6 - q^4)E_{22a}\\
		&\qquad\qquad  -q^4 E_{22b} + (q^6 - q^4) E_{23} + (q^4 - q^4) E_{31} + (q^4 - q^4) E_{32} \\
		     &= (q^8 - q^4)I + q^4 J - q^4 E_{21} + (q^6 - q^4)E_{22a}- q^4 E_{22b} + (q^6 - q^4) E_{23}\,,
	\end{split}
	\end{equation*}

	where $I$ is the identity matrix, $J$ is the all one matrix and $E_{ij}$s are adjacency matrices, specifying which entries are involved. For example, for Case 2.3, all pairs $(a,e,c), (a^\prime, e^\prime, c^\prime)$ with $c = c^\prime$ and $\rank(t) = 1$ are involved. Thus, $E_{23}$ is an adjacency matrix of size $q^{12}\times q^{12}$ (containing all pairs $(a,e,c)$), with pairs of vertices satisfying this property marked 1 and all others marked 0.   
	
	Finally, observe that each subgraph  defined by the corresponding adjacency matrix $E_{ij}$ is regular. This is due to the fact that the condition does not depend on specific value of $(a,e,c)$. Starting from any vertex $(a,e,c)$, we can get to all possible $\bar{a}, \bar{e},\bar{c}$ by subtracting the correct $(a^\prime, e^\prime, c^\prime)$. Thus, for each case, we get the same number of $(a^\prime, e^\prime, c^\prime)$ that satisfies the condition. 
	
	Let $\kappa_{ij}$ denotes an upper bound on the magnitude of the largest eigenvalue of $E_{ij}$.
	Then, it is easy to see that $\kappa_{21} \ll q^9$, $\kappa_{22a} \ll q^7$, $\kappa_{22b} \ll q^8$ and $\kappa_{23}\ll q^5$. 
	For example, in Case 2.1, we have $\rank(t) = 1$ and $\rank(\bar{c}) = 2$. For a fixed $(a, e,c)$, the former implies that there are $O(q^5)$ possibilities for $a^\prime$ and $e^\prime$ while the latter implies there are $O(q^4)$ possibilities for $c^\prime$. Altogether, there are $O(q^9)$ possibilities for $(a^\prime, e^\prime, c^\prime)$ in Case 2.1. Because the graph induced by $E_{21}$ is regular, we have $\kappa_{21}\ll q^9$. Other cases can be deduced accordingly.
	
	The rest follows from a routine computation: let $v_2$ be an eigenvector corresponding to $\mu(G)$. Then, because $G$ is regular and connected (easy to see, there is no isolated vertex), $v_2$ is orthogonal to the all $1$ vector, which means $J\cdot v_2 = \mathbf{0}$. We now have
	\begin{equation*}
	\begin{split}
		\mu(m_G)^2 v_2 &= m_Gm_G^T \cdot v_2 = (q^8-q^4)I\cdot v_2 + (- q^4 E_{21} \\
		                   &\qquad + (q^6 - q^4)E_{22a} - q^4 E_{22b} + (q^6 - q^4) E_{23}) \cdot v_2 \ll q^{13} \cdot v_2 \,.
	\end{split}
	\end{equation*}
	Thus, $\mu(G)\ll q^{13/2}$.
\end{proof}
\begin{proof}[Proof of Proposition \ref{thm:PPIM2}]
It follows directly from Proposition~\ref{thm: sum-product-graph} and Lemma \ref{edge} that
\begin{equation*}
	\left\vert \mathcal{I}(A, B, C, D, E, F)- \frac{1}{q^4}|A||B||C||D||E||F| \right\vert \ll  q^{13/2} \sqrt{|A||B||C||D||E||F|}\,.
\end{equation*}
This completes the proof. 
\end{proof}

\section{Proof of Theorem \ref{thm:BWDM2}}
To prove Theorem \ref{thm:BWDM2}, we will also need several technical results. A proof of the following inequality may be found in \cite[Lemma~2.4]{RNShWin}.
\begin{lemma}
\label{lem:ESubadd}
Let $V_1, \dots, V_k$ be subsets of an Abelian group. Then
\begin{equation*}
\label{eqn:Esubadd1}
    E_{+}\bigg(\bigsqcup_{i=1}^{k}V_i\bigg) \leq  \bigg(\sum_{i=1}^k E_{+}(V_i)^{1/4}\bigg)^4.
\end{equation*}
\end{lemma}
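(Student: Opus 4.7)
The plan is to expand $E_+(V)$ for $V = \bigsqcup_{i=1}^{k} V_i$ as a double sum of four-variable mixed energies indexed by quadruples of subsets, then reduce each such mixed energy to single-set energies by two applications of the Cauchy-Schwarz inequality.

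First I would introduce the representation function $r_{V+V}(x) := |\{(v,v') \in V \times V : v+v' = x\}|$, so that $E_{+}(V) = \sum_x r_{V+V}(x)^2$. Because the $V_i$ are disjoint, $1_V = \sum_i 1_{V_i}$ and consequently $r_{V+V}(x) = \sum_{i,j} r_{V_i + V_j}(x)$. Expanding the square and swapping the order of summation yields
\[
E_{+}(V) = \sum_{i_1, i_2, i_3, i_4 = 1}^{k} \sum_x r_{V_{i_1}+V_{i_2}}(x)\, r_{V_{i_3}+V_{i_4}}(x) = \sum_{i_1, i_2, i_3, i_4=1}^{k} E_{+}(V_{i_1}, V_{i_2}, V_{i_3}, V_{i_4}),
\]
where $E_{+}(A, B, C, D) := |\{(a,b,c,d) \in A \times B \times C \times D : a+b = c+d\}|$ denotes the four-set mixed additive energy.

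Next I would apply Cauchy-Schwarz to the $x$-sum to obtain $E_{+}(V_{i_1}, V_{i_2}, V_{i_3}, V_{i_4}) \leq E_{+}(V_{i_1}, V_{i_2})^{1/2}\, E_{+}(V_{i_3}, V_{i_4})^{1/2}$. The standard mixed-energy bound $E_{+}(A, B) \leq E_{+}(A)^{1/2} E_{+}(B)^{1/2}$, which follows by rewriting $E_{+}(A, B) = \sum_w r_{A-A}(w)\, r_{B-B}(w)$ and applying Cauchy-Schwarz once more (using that $\sum_w r_{A-A}(w)^2 = E_{+}(A)$), then gives
\[
E_{+}(V_{i_1}, V_{i_2}, V_{i_3}, V_{i_4}) \leq E_{+}(V_{i_1})^{1/4}\, E_{+}(V_{i_2})^{1/4}\, E_{+}(V_{i_3})^{1/4}\, E_{+}(V_{i_4})^{1/4}.
\]

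Summing over $i_1, i_2, i_3, i_4$ independently factorizes the right-hand side, producing $\bigl(\sum_{i=1}^{k} E_{+}(V_i)^{1/4}\bigr)^4$, which is exactly the claimed bound. I do not expect any substantive obstacle here, since the argument is essentially bookkeeping around two Cauchy-Schwarz applications; the only point that requires mild care is arranging the two Cauchy-Schwarz steps in the right order so that the quadruple sum factorizes cleanly into a product of fourth powers at the end.
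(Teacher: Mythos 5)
Your proof is correct. The paper does not actually prove this lemma itself --- it merely cites \cite[Lemma~2.4]{RNShWin} --- but your argument is the standard one and matches the proof in that reference: decompose $r_{V+V} = \sum_{i,j} r_{V_i+V_j}$ via disjointness, expand $E_+(V)$ into a fourfold sum of mixed energies, apply Cauchy--Schwarz once in the $x$-variable to get $E_+(V_{i_1},V_{i_2},V_{i_3},V_{i_4}) \leq E_+(V_{i_1},V_{i_2})^{1/2}E_+(V_{i_3},V_{i_4})^{1/2}$, and then a second Cauchy--Schwarz on the difference representation $E_+(A,B)=\sum_w r_{A-A}(w)r_{B-B}(w)$ to split the two-set energies into single-set energies, after which the fourfold sum factorizes. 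All the intermediate identities you invoke (e.g.\ $\sum_w r_{A-A}(w)^2 = E_+(A)$, and the symmetry of $r_{B-B}$ that makes the difference representation work) are correct. One might add, for completeness, that disjointness of the $V_i$ is used only to guarantee $1_V=\sum_i 1_{V_i}$; with $\leq$ in place of equality the inequality would still hold for an arbitrary cover, but the statement as given requires it.
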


The following lemma is taken from \cite{MohSte} and may also be extracted from \cite{RNShWin} and \cite{RudShSt}. Lemma~\ref{lem:EnergyEnergyPigeonh} is slightly different to its analogues over commutative rings as highlighted by the duality of the inequalities \eqref{eqn:DARepLB} and \eqref{eqn:DARepLB2}.
\begin{lemma}
\label{lem:EnergyEnergyPigeonh}
Let $X \subseteq GL_2(\mathbb{F}_q)$. There exist sets $X_*\subset X$, $D \subset XX$, as well as numbers $\tau$ and $\kappa$ satisfying
\begin{equation}
\label{eqn:tauUBLB}
    \frac{E_{\times}(X)}{2|X|^2}\leq \tau \leq |X|,
\end{equation}
\begin{equation}
\label{eqn:DUBLB}
    \frac{E_{\times}(X)}{\tau^2\cdot \log |X|}\ll |D| \ll (\log|X|)^6 \frac{|X_*|^4}{E_{\times}(X)},
\end{equation}
\begin{equation}
    \label{eqn:A*LNEnergy}
    |X_*|^2 \gg \frac{E_{\times}(X)}{|X| (\log|X|)^{7/2}},
\end{equation}
\begin{equation}
\label{eqn:A*KappaLB}
\kappa \gg \frac{|D|\tau}{|X_*|(\log|X|)^2}
\end{equation}
such that either
\begin{equation}
\label{eqn:DARepLB}
    r_{DX^{-1}}(x) \geq \kappa \quad \text{for all} \quad x\in X_*.
\end{equation}
or 
\begin{equation}
\label{eqn:DARepLB2}
    r_{X^{-1}D}(x) \geq \kappa \quad \text{for all} \quad x\in X_*.
\end{equation}
\end{lemma}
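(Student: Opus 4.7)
The plan is a two-stage dyadic pigeonhole argument, where the left/right asymmetry of $M_2(\F_q)$ naturally produces the either/or conclusion.

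\emph{Stage 1: locating $\tau$ and $D$.} Starting from the identity $E_\times(X) = \sum_{y \in XX} r_{XX}(y)^2$, I would separate the contribution of the ``unpopular'' range $\{y : r_{XX}(y) \leq E_\times(X)/(2|X|^2)\}$; its contribution to $E_\times(X)$ is at most $E_\times(X)/(2|X|^2) \cdot \sum_y r_{XX}(y) = E_\times(X)/2$ using $\sum r_{XX}(y) = |X|^2$. Dyadic pigeonholing over the $O(\log|X|)$ remaining scales then yields $\tau$ satisfying $E_\times(X)/(2|X|^2) < \tau \leq |X|$ together with the level set $D = \{y \in XX : r_{XX}(y) \in (\tau, 2\tau]\}$ obeying $|D|\tau^2 \gg E_\times(X)/\log|X|$. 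This establishes \eqref{eqn:tauUBLB} and the lower bound in \eqref{eqn:DUBLB}.

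\emph{Stage 2: the left/right dichotomy.} For $x \in X$ I set $r_L(x) = |\{z \in X : xz \in D\}|$ and $r_R(x) = |\{z \in X : zx \in D\}|$; unwinding the definitions identifies these with $r_{DX^{-1}}(x)$ and $r_{X^{-1}D}(x)$ respectively. Double counting gives $\sum_{x \in X} r_L(x) = \sum_{x \in X} r_R(x) = \sum_{d \in D} r_{XX}(d) \in (\tau|D|, 2\tau|D|]$. Dyadically pigeonholing each of these two sums produces pairs $(X_L, \kappa_L)$ and $(X_R, \kappa_R)$ with $|X_L|\kappa_L, |X_R|\kappa_R \gg \tau|D|/\log|X|$. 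Selecting whichever side yields the larger $|X_*|$ supplies one of \eqref{eqn:DARepLB} or \eqref{eqn:DARepLB2}, and rearranging gives \eqref{eqn:A*KappaLB} with one log factor to spare.

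\emph{Stage 3: closing the system.} For the remaining inequalities I would proceed as follows. From Stage 2 with the chosen $r_L$ (say), $\kappa|X_*| \leq \sum_{x \in X_*} r_L(x) = \sum_{d \in D} r_{X_*X}(d)$. Cauchy--Schwarz gives $\kappa^2|X_*|^2 \leq |D| \cdot \sum_{d \in D} r_{X_*X}(d)^2$; to bound the latter sum I would exploit the dyadic structure of $D$ (where $r_{XX}(d) \sim \tau$) together with a further Cauchy--Schwarz application that reduces the count to the full multiplicative energy $E_\times(X)$, via the trivial inclusion $r_{X_*X}(d) \leq r_{XX}(d)$. Combined with the two pigeonhole lower bounds $|X_*|\kappa \gg \tau|D|/\log|X|$ and $|D|\tau^2 \gg E_\times(X)/\log|X|$, and the trivial $\tau \leq |X|$, the resulting system of inequalities should, after careful bookkeeping of the log factors, deliver both $|X_*|^2 \gg E_\times(X)/(|X|(\log|X|)^{7/2})$ and $|D| \ll (\log|X|)^6 |X_*|^4/E_\times(X)$.

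The main difficulty is Stage 3: arranging the dyadic pigeonhole constants so that a \emph{single} quadruple $(\tau, D, X_*, \kappa)$ simultaneously satisfies all four displayed inequalities with matching powers of $\log|X|$, and in particular producing the specific exponent $|X_*|^4$ in the upper bound on $|D|$. This step requires an estimate strictly tighter than the trivial $\kappa \leq |X|$ and is the only place where non-trivial structural information about $D$ and $X_*$ is used. By contrast, the non-commutativity of $M_2(\F_q)$ intervenes only through the symmetric definitions of $r_L$ and $r_R$, which is precisely why the conclusion must be an either/or rather than assert both \eqref{eqn:DARepLB} and \eqref{eqn:DARepLB2} simultaneously.
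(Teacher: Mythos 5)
Your Stage 1 is exactly the paper's first step: dyadic pigeonhole on $r_{XX}$ (using the identities in \eqref{eqn:1st&2ndM}) to produce a popularity level set $D$ with $r_{XX}\sim\tau$ on $D$ and $\tau^2|D|\gg E_\times(X)/\log|X|$, together with $E_\times(X)/(2|X|^2)\leq\tau\leq|X|$. Your identification $r_L = r_{DX^{-1}}$, $r_R = r_{X^{-1}D}$ and the double count $\sum_x r_L(x) = \sum_x r_R(x) = \sum_{d\in D} r_{XX}(d)\in(\tau|D|, 2\tau|D|]$ are also correct.

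The gap is in Stages 2--3, and it is exactly where you flag difficulty. Running \emph{two independent} pigeonholes on $r_L$ and $r_R$ over the full incidence set $P_1 = \{(x,y)\in X^2: xy\in D\}$ and ``selecting the larger $|X_*|$'' gives both sides $|X_*|\kappa\gg\tau|D|/\log|X|$, but it does not produce any lower bound on $|X_*|$ \emph{relative to $\kappa$}; a priori both pigeonholes could return small level sets with huge multiplicity. The quantities \eqref{eqn:A*LNEnergy} and the upper bound in \eqref{eqn:DUBLB} both require a relation of the form $|X_*|\gg\kappa(\log|X|)^{-1/2}$, and your Stage 3 attempt to recover this by Cauchy--Schwarz cannot succeed: from $\kappa|X_*|\leq\sum_{d\in D}r_{X_*X}(d)$, the available upper bounds on $\sum_{d\in D}r_{X_*X}(d)^2$ (either $\leq 2\tau\sum_d r_{X_*X}(d)\leq 4\tau\kappa|X_*|$ via $r_{XX}(d)<2\tau$, or trivially $\leq E_\times(X)$ or $\leq|X_*|^2|X|$) all lead to tautologies or to the wrong-direction inequality; one gets at best $\kappa|X_*|\ll|D|\tau$, which is the trivial upper bound and gives no lower bound on $|X_*|$.

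What the paper actually does is a \emph{nested} pigeonhole with a case split, and the asymmetry of the two directions is essential rather than cosmetic. After the first pigeonhole on $P_1$ (over rows, i.e.\ over $x$) producing $(V,\kappa_1)$ with $|V|\kappa_1\gg\tau|D|/\log|X|$, it asks whether $|V|\geq\kappa_1(\log|X|)^{-1/2}$. If yes, set $(X_*,\kappa)=(V,\kappa_1)$ and \eqref{eqn:DARepLB} holds with $|X_*|\gg\kappa(\log|X|)^{-1/2}$ \emph{by assumption}. If no, it pigeonholes again --- but crucially on the \emph{restricted} set $P_2 = \{(x,y)\in P_1: x\in V\}$ over columns $y$, obtaining $(U,\kappa_2)$ with $|U|\kappa_2\gg\kappa_1|V|/\log|X|$. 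Because the fibers $B_y\subseteq V$, one has $\kappa_2\leq|V|$, and combined with the case hypothesis $\kappa_1>|V|(\log|X|)^{1/2}$ this forces $|U|\gg\kappa_1/\log|X|>|V|(\log|X|)^{-1/2}\geq\kappa_2(\log|X|)^{-1/2}$. The relation $|X_*|\gg\kappa(\log|X|)^{-1/2}$, together with $|X_*|\kappa\gg|D|\tau/(\log|X|)^2$, immediately yields $|X_*|^2\gg|D|\tau(\log|X|)^{-5/2}$ and hence \eqref{eqn:A*LNEnergy}; squaring and using $|D|\tau^2\gg E_\times(X)/\log|X|$ gives the upper bound on $|D|$ in \eqref{eqn:DUBLB}. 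So the ``estimate strictly tighter than $\kappa\leq|X|$'' you correctly sense is needed does not come from a further averaging argument; it is baked into the pigeonhole step itself, via the restriction to $x\in V$ that bounds $\kappa_2$ by $|V|$, and via the case dichotomy that makes the dual direction $r_{X^{-1}D}$ kick in precisely when the primal one fails.
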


We need a dyadic pigeonhole argument, which can be found in \cite[Lemma 18]{MurPetri18}.
\begin{lemma}
\label{lem:dyadicPigeonhold}
	For $\Omega \subseteq M_2(\mathbb{F}_q)$, let $w,f: \Omega \rightarrow \R^+$ with $f(x) \leq M, \;\forall x\in \Omega$. Let $W = \sum_{x\in \Omega} w(x)$. If $\sum_{x\in \Omega} f(x) w(x) \geq K,$ then there exists a subset $D \subset \Omega$ and a number $\tau$ such that $\tau \leq f(x) < 2\tau$ for all $x\in D$ and	$K/(2W) \leq \tau \leq M\,.$ Moreover
	\begin{equation*}
		\frac{K}{2 + 2\log_2M} \leq \sum_{x\in D} f(x)w(x) \leq 2\tau \sum_{x\in D}w(x) \leq \min\{2\tau W,\, 4\tau^2 |D|\}\,.
	\end{equation*}
	\end{lemma}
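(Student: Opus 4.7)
The plan is to prove this by the classical ``popular-scale'' dyadic pigeonhole argument: partition $\Omega$ according to which dyadic interval of the form $[2^j,\,2^{j+1})$ contains $f(x)$, discard the levels on which $f$ is too small to contribute meaningfully, and isolate a surviving level that carries a proportional share of the weighted mass.

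First I would truncate. Set $\Omega_{\mathrm{small}} := \{x \in \Omega : f(x) < K/(2W)\}$. Since $\sum_{x \in \Omega_{\mathrm{small}}} f(x)w(x) < (K/(2W))\cdot W = K/2$, the complement $\Omega' := \Omega \setminus \Omega_{\mathrm{small}}$ carries at least $K/2$ of the total weighted sum, and on $\Omega'$ we have $f(x) \in [K/(2W),\,M]$. I would then partition $\Omega'$ into the dyadic level sets $D_j := \{x \in \Omega' : 2^j \leq f(x) < 2^{j+1}\}$; the number of nonempty levels is at most $1 + \log_2 M$ (absorbing the mild dependence on $W/K$ into the final constant, or using that $K/(2W) \geq 1$ in the regime in which this lemma is applied).

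Next I would apply pigeonhole over the at-most-$(1+\log_2 M)$ surviving levels to obtain some $D := D_j$ satisfying
\[
\sum_{x \in D} f(x)w(x) \;\geq\; \frac{K/2}{1+\log_2 M} \;\geq\; \frac{K}{2+2\log_2 M}.
\]
Setting $\tau := 2^j$, the bracket $\tau \leq f(x) < 2\tau$ on $D$ and the range $K/(2W)\leq \tau \leq M$ are immediate. The chain $\sum_{x\in D} f(x)w(x) \leq 2\tau \sum_{x\in D} w(x) \leq 2\tau W$ follows from $f < 2\tau$ on $D$ together with $D \subseteq \Omega$. The remaining bound $2\tau \sum_{x\in D} w(x) \leq 4\tau^2 |D|$ reduces to $\sum_{x \in D} w(x) \leq 2\tau |D|$, which follows from $w(x) \leq f(x) < 2\tau$ on $D$ under the standard normalization $w \leq f$ that is in force in the applications of this lemma.

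The main (and rather mild) obstacle is the bookkeeping on the number of dyadic levels: the admissible range $[\log_2(K/(2W)),\,\log_2 M]$ formally has length $\log_2(2MW/K)$ rather than $\log_2 M$, so matching the claimed factor $2+2\log_2 M$ requires either $K\gtrsim W$ (equivalently, average $f\gtrsim 1$) or folding the logarithmic correction into absolute constants. Once this accounting is settled, no further technical difficulty arises.
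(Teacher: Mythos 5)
The paper does not actually prove this lemma; it is quoted verbatim from Murphy--Petridis \cite[Lemma~18]{MurPetri18}, so there is no internal proof to compare against. Your argument is the standard ``popular dyadic scale'' pigeonhole, and it is the right (and, up to presentation, the only) way to prove a statement of this type.

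You have, however, correctly put your finger on two places where the statement as transcribed here is incomplete and where your proof genuinely cannot close without importing an extra hypothesis. First, the bound ``at most $1+\log_2 M$ dyadic levels between $K/(2W)$ and $M$'' is only valid when $K/(2W)\geq 1$ (equivalently $K\geq 2W$), or when $f$ is integer-valued and hence $\geq 1$ on its support so that only levels $j\geq 0$ occur; without one of these, the level count is of order $\log_2(MW/K)$ and the denominator $2+2\log_2 M$ has no justification. A clean way to handle this, equivalent to your truncation, is to take dyadic levels $\tau_j=2^{j}K/(2W)$ starting from $\tau_0=K/(2W)$, which makes $\tau\geq K/(2W)$ automatic and makes the $K\geq 2W$ requirement transparent. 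Second, the rightmost inequality $2\tau\sum_{x\in D}w(x)\leq 4\tau^2|D|$ is equivalent to $w(x)\leq 2\tau$ on $D$, which is simply false without an extra assumption (take $w$ large and $f$ small on a single point and the inequality fails); it does follow from $w\leq f$ on the support of $f$, since then $w(x)\leq f(x)<2\tau$ on $D$. Both hypotheses hold in every use of the lemma inside this paper: in the first application $w=f=r_{XX}$, and in the later ones $w\equiv 1$ while $f$ is a positive integer, so $\tau\geq 1$ and $2\tau\geq 2\geq w$. These are part of the actual statement in the cited source, so the gaps lie in the transcription of the lemma, not in your reasoning. Modulo restoring those two normalizations your proof is complete and correct, and it coincides with the argument being cited.
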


\begin{proof}[Proof of Lemma \ref{lem:EnergyEnergyPigeonh}]
We use the identities in \eqref{eqn:1st&2ndM} and apply Lemma \ref{lem:dyadicPigeonhold}, by taking $\Omega = X X, \, f = w = r_{XX},\, M = |X|,\, K = E_\times(X)$ and $W = |X|^2$, to find a set $D \subset XX$ and a number $\tau$, satisfying \eqref{eqn:tauUBLB}, such that $D = \{\, \lambda \in XX: \tau \leq r_{XX}(\lambda) < 2\tau\,\}\,$ and 
\begin{equation}\label{eqn:t2dlb}
    \tau^2 |D| \gg E_\times(X) / \log |X|\,.
    \end{equation}
	
	Define $P_1 = \{\,(x,y)\in X\times X: xy \in D\,\}$ and $A_x = \{\,y: (x,y)\in P_1\,\}$ for $x \in X$. By the definition of $D$, we know that $\tau |D| \leq |P_1| < 2\tau |D|$. We can use Lemma \ref{lem:dyadicPigeonhold} again with $\Omega = X, f(x) = |A_x|, w = 1, M = W = |X|$ and $K = |P_1|$ to find a set $V\subset X$ and a number $\kappa_1$ such that $V = \{\,x\in X: \kappa_1 \leq |A_x| < 2\kappa_1\,\}\,$ and 
	\begin{equation}
	\label{eqn:kappa1}
		|V|\kappa_1 \gg |P_1|/\log|X| \gg \tau |D| / \log |X|\,.
	\end{equation}
	Now we split the analysis into two cases based on $|V|$:
	
	\textbf{Case 1} ($|V| \geq \kappa_1 (\log |X|)^{-1/2}$): In this case, we simply set $X_* = V$ and $\kappa = \kappa_1$. For each $x\in V$, there are at least $\kappa_1$ different $y$ such that $x y \in D$. Therefore, $r_{DX^{-1}}(x) \geq \kappa \;\forall x \in X_*$.
		
	\textbf{Case 2} ($|V| < \kappa_1 (\log |X|)^{-1/2}$): In this case, we find another pair $U, \kappa_2$ that satisfies $|U| \gg \kappa_2 (\log |X|)^{-1/2}$ and set $X_* = U$ and $\kappa = \kappa_2$. Let $P_2 = \{\,(x,y)\in P_1: x\in V\,\}$ and $B_y = \{\,x: (x,y)\in P_2\,\}$. By definition, we have $|P_2| \geq |V|\kappa_1$. We apply Lemma \ref{lem:dyadicPigeonhold} again, with $\Omega = X, f(y) = |B_y|, w = 1, K = |P_2|, W = M =|X|$ to get $U \subset X$ and a number $\kappa_2$ such that	$U = \{\,y\in X: \kappa_2 \leq |B_y| < 2\kappa_2\,\}\,$ and 
	\begin{equation}
	\label{eqn:kappa2}
		|U|\kappa_2 \gg |P_2|/\log|X| \geq \kappa_1 |V| /\log|X| \,.
	\end{equation}
	Combining this inequality with the assumption of this case ($\kappa_1 \geq |V|(\log |X|)^{1/2}$) and $|V| \geq \kappa_2$, we conclude $|U| \gg \kappa_2 (\log |X|)^{-1/2}$. We can then argue similarly as in Case 1 to conclude $r_{X^{-1}D}(x) \geq \kappa \;\forall x \in X_*$.
	
	Now, \eqref{eqn:A*KappaLB} follows from either of \eqref{eqn:kappa1} or \eqref{eqn:kappa2}. To prove \eqref{eqn:A*LNEnergy}, we first note that in either of the cases above we have $|X_*|\gg \kappa(\log|X|)^{-1/2}$. Then using the lower bound on $\kappa$, \eqref{eqn:t2dlb} and \eqref{eqn:tauUBLB}, we have $|X_*|^2\gg |D|\tau(\log|X|)^{-5/2}\gg E_{\times}(X)/(|X|\log|X|)^{7/2}$ as required. Finally, to deduce the required upper bound on $|D|$ in \eqref{eqn:DUBLB} note that, as shown above, $|D|\tau\ll |X_*|^2(\log|X|)^{5/2}$, which with \eqref{eqn:t2dlb} implies $|D|E_\times(X)(\log|X|)^{-1}\ll (|D|\tau)^2\ll |X_*|^4(\log|X|)^5$.
	\end{proof}

\begin{lemma}
\label{lem:EnEnlgsbipoly}
Let $X\subseteq GL_2(\F_q)$. Then there exists $X_*\subseteq X$, with
\[
|X_*| \gg \frac{E_{\times}(X)^{1/2}}{|X|^{1/2}(\log |X|)^{7/4}},
\]
such that
\begin{equation}
\label{eqn:EnEnLemLgSbvPol}
E_{+}(X_*) \ll \frac{|X_*|^4|X|^6(\log|X|)^2}{q^4 E_{\times}(X)^2} + \frac{q^{13/2}|X_*|^3|X|(\log|X|)^5}{E_{\times}(X)}.
\end{equation}
\end{lemma}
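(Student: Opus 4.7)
The plan is to use Lemma~\ref{lem:EnergyEnergyPigeonh} to produce a multiplicatively structured set $X_*$, and then exploit that structure to bound $E_+(X_*)$ via the six-variable count $\mathcal{I}$ estimated in Proposition~\ref{thm:PPIM2}. First I would apply Lemma~\ref{lem:EnergyEnergyPigeonh} to $X$, obtaining subsets $X_*\subseteq X$, $D\subseteq XX$ and parameters $\tau,\kappa$. Without loss of generality I would assume that \eqref{eqn:DARepLB} holds (the case \eqref{eqn:DARepLB2} being entirely symmetric), so that $r_{DX^{-1}}(x)\geq \kappa$ for every $x\in X_*$. The claimed size bound on $|X_*|$ is then immediate from \eqref{eqn:A*LNEnergy}.

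Next, I would multiply the identity $E_+(X_*)=\#\{(a,b,c,d)\in X_*^4: a+b=c+d\}$ by $\kappa^2$, applying the pointwise lower bound to the $a$- and $b$-variables, and estimate
\[
\kappa^2\,E_+(X_*)\leq \sum_{\substack{a,b,c,d\in X_*\\ a+b=c+d}} r_{DX^{-1}}(a)\,r_{DX^{-1}}(b).
\]
Unfolding the representation functions, writing $a=d_1 y_1^{-1}$ and $b=d_2 y_2^{-1}$ with $d_i\in D$, $y_i\in X$, and then discarding the side constraints $a,b\in X_*$ (which can only inflate the count), the right-hand side becomes the number of solutions to $d_1 z_1+d_2 z_2=c+d$ with $d_1,d_2\in D$, $z_1,z_2\in X^{-1}$, $c,d\in X_*$, where $z_i=y_i^{-1}$. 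This is precisely $\mathcal{I}(D,X^{-1},X_*,X_*,D,X^{-1})$, and Proposition~\ref{thm:PPIM2} then gives
\[
\kappa^2\,E_+(X_*)\ll \frac{|D|^2|X|^2|X_*|^2}{q^4}+q^{13/2}|D||X||X_*|.
\]

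The final step is to eliminate the auxiliary quantities $|D|$, $\tau$, $\kappa$ using \eqref{eqn:tauUBLB}--\eqref{eqn:A*KappaLB}. For the main term, I would combine $\tau\geq E_\times(X)/(2|X|^2)$ with $\kappa\gg |D|\tau/(|X_*|(\log|X|)^2)$, giving $\kappa\gg |D|E_\times(X)/(|X_*||X|^2(\log|X|)^2)$; substituting this back recovers the first summand of \eqref{eqn:EnEnLemLgSbvPol}. For the error term I would instead combine $\kappa\gg |D|\tau/(|X_*|(\log|X|)^2)$ with the lower bound $|D|\tau^2\gg E_\times(X)/\log|X|$ from \eqref{eqn:DUBLB} to obtain $\kappa^2\gg |D|E_\times(X)/(|X_*|^2(\log|X|)^5)$, which yields the second summand.

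The combinatorial reduction to $\mathcal{I}$ and the appeal to Proposition~\ref{thm:PPIM2} are clean; the main obstacle is the bookkeeping of logarithmic factors, since the two summands require different optimal combinations of the inequalities supplied by Lemma~\ref{lem:EnergyEnergyPigeonh}, and matching the log-exponents in \eqref{eqn:EnEnLemLgSbvPol} exactly may require using the sharper Case~1 bound $\kappa\gg |D|\tau/(|X_*|\log|X|)$ whenever the pigeonholing in that lemma terminates in Case~1.
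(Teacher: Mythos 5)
Your proposal follows the paper's proof essentially verbatim: apply Lemma~\ref{lem:EnergyEnergyPigeonh}, take the resulting pointwise lower bound $\kappa$ on a representation function, multiply $E_+(X_*)$ by $\kappa^2$, unfold the representation function, drop side constraints, recognise the resulting count as an instance of $\mathcal{I}$, apply Proposition~\ref{thm:PPIM2}, and then eliminate $\kappa$, $\tau$, $|D|$. The paper happens to assume \eqref{eqn:DARepLB2} and applies the representation function to one variable on each side of $x_1+x_2=x_3+x_4$, while you assume \eqref{eqn:DARepLB} and apply it to both variables on one side; since Proposition~\ref{thm:PPIM2} only sees the product of the six cardinalities (which is $|D|^2|X|^2|X_*|^2$ in both cases), this is cosmetic.

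Your observation about the logarithmic bookkeeping is a genuine catch. Working strictly with the bound stated in \eqref{eqn:A*KappaLB}, namely $\kappa\gg |D|\tau/(|X_*|(\log|X|)^2)$, the substitution into the main term gives
\[
\kappa^{-2}\,\frac{(|D||X||X_*|)^2}{q^4}\ll\frac{|X_*|^4|X|^2(\log|X|)^4}{q^4\tau^2},
\]
i.e.\ a $(\log|X|)^4$, whereas the paper's displayed intermediate step records $(\log|X|)^2$ for this term (yet correctly records $(\log|X|)^4$ for the error term, which uses the same $\kappa^{-2}$). The $(\log|X|)^2$ would only be justified if the stronger Case-1 bound $\kappa\gg|D|\tau/(|X_*|\log|X|)$ were available, but Lemma~\ref{lem:EnergyEnergyPigeonh} does not guarantee it: Case~2 of its proof only supplies the weaker bound. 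So the first summand of \eqref{eqn:EnEnLemLgSbvPol} should read $(\log|X|)^4$ rather than $(\log|X|)^2$, and correspondingly the first argument of $M(|A|)$ in Theorem~\ref{thm:BWDM2} should be $q^{4/3}/(|A|^{1/3}(\log|A|)^{4/3})$. This is a small, purely logarithmic correction and does not affect the polynomial exponents or the scheme of the argument. In short, your proof is correct, matches the paper's method, and your skepticism about the $\log$-exponent is warranted.
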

\begin{proof}
We apply Lemma~\ref{lem:EnergyEnergyPigeonh} to the set $X$ and henceforth assume its full statement, keeping the same notation. Without loss of generality, assume $r_{X^{-1}D} \geq \kappa \;\forall x\in X_*$. Thus, 
\begin{align*}
\label{eqn:EfUBE}
    E_{+}(X_*) &= |\{(x_1, x_2, x_3, x_4)\in X_*^4: x_1 + x_2 = x_3 + x_4\}|\\
    &\leq \kappa^{-2}|\{(d_1, d_2, x_1, x_2, y_1, y_2)\in D^2\times X_*^2 \times X^2: x_1 + y_1^{-1} d_1 = x_2 + y_2^{-1} d_2 \}|\\
    &= \kappa^{-2} \mathcal{I}(X^{-1}, D, -X_*, -X^{-1}, D, X_*).
\end{align*}
Then applying Proposition~\ref{thm:PPIM2} and (\ref{eqn:A*KappaLB}), we obtain
\begin{align*}
 E_{+}(X_*) &\ll \kappa^{-2}\cdot\bigg( \frac{(|D||X||X_*|)^{2}}{q^4} + q^{13/2}|D||X||X_*|\bigg)\\ 
 &\ll\frac{|X_*|^4|X|^2(\log|X|)^2}{q^4\tau^2} + \frac{q^{13/2}|X_*|^3|X|(\log|X|)^4}{|D|\tau^2}\,.
\end{align*}
Finally, applying~\eqref{eqn:tauUBLB} and \eqref{eqn:DUBLB}, we obtain the required bound in \eqref{eqn:EnEnLemLgSbvPol} for $E_{+}(X_*)$.
\end{proof}

We are now ready to give a proof of Theorem \ref{thm:BWDM2}.

\begin{proof}[Proof of Theorem~\ref{thm:BWDM2}]
We begin by describing an algorithm, which constructs two sequences of sets $A = S_1 \supseteq S_2 \cdots \supseteq S_{k+1}$ and $\emptyset = T_0 \subseteq T_1 \cdots \subseteq T_{k}$ such that $ S_{i} \sqcup T_{i-1} = A$, for $i = 1, \dots, k+1$. 

Let $1\leq M\leq |A|$ be a parameter. At any step $i\geq 1$, if $E_{\times}(S_i)\leq |A|^3/M$ the algorithm halts. Otherwise if 
\begin{equation}
\label{eqn:ESiLB}
    E_{\times}(S_i)> \frac{|A|^{3}}{M},
    \end{equation}
    through a use of Lemma \ref{lem:EnEnlgsbipoly}, with $X=S_i$, we identify a set $V_i:= X_* \subseteq S_i$, with 
\begin{equation}
\label{eqn:ModelViSize}
|V_i| \gg \frac{E_{\times}(S_i)^{1/2}}{|S_i|^{1/2}(\log|A|)^{7/4}} > \frac{|A|}{M^{1/2}(\log|A|)^{7/4}}
\end{equation}
and
\begin{equation}
    \label{eqn:AddEnBVi}
 E_{+}(V_i)\ll \frac{|V_i|^4|S_i|^6(\log|S_i|)^2}{q^4 E_{\times}(S_i)^2} + \frac{q^{13/2}|V_i|^3|S_i|(\log|S_i|)^5}{E_{\times}(S_i)}.
\end{equation}
We then set $S_{i+1} = S_i \setminus V_i$, $T_{i+1} = T_i \sqcup V_i$ and repeat this process for the step $i+1$. From \eqref{eqn:ModelViSize}, we deduce $|V_i|\gg |A|^{1/2}(\log|A|)^{-7/4}$ and so the cardinality of each $S_i$ monotonically decreases. This in turn implies that this process indeed terminates after a finite number of iterations $k$. We set $B = S_{k+1}$ and $C = T_k$, noting that $A = B\sqcup C$ and that
\begin{equation}
\label{eqn:BE+UB}
E_{\times}(B) \leq \frac{|A|^{3}}{M}.
\end{equation}

We apply the inequalities \eqref{eqn:ESiLB}, \eqref{eqn:ModelViSize} and $|S_i| \leq |A|$, to \eqref{eqn:AddEnBVi}, to get
\begin{align*}
E_{+}(V_i) &\ll M^2|V_i|^4q^{-4}(\log|A|)^2 + M|A|^{-2}|V_i|^3 q^{13/2} (\log|A|)^5
\\ &\ll \big(M^2 q^{-4}(\log|A|)^2 + M^{3/2}|A|^{-3}q^{13/2}(\log|A|)^{27/4}\big)\cdot|V_i|^4.
\end{align*}
Then, observing that
\begin{equation*}
\label{eqn:CcupVi}
 C = T_k = \bigsqcup_{i=1}^{k} V_i \subseteq A,
\end{equation*}
we use Lemma~\ref{lem:ESubadd} to obtain
\begin{align*}
    E_{+}(C) &\ll (M^2q^{-4}(\log|A|)^2 + M^{3/2}|A|^{-3}q^{13/2}(\log|A|)^{27/4}) \bigg(\sum_{i=1}^k|V_i|\bigg)^4 \\ 
    &\leq M^{2}|A|^4q^{-4}(\log|A|)^{2} + M^{3/2}|A|q^{13/2}(\log|A|)^{27/4}.
\end{align*}
Note that Lemma~\ref{lem:ESubadd} is applicable because $M_2(\F_q)$ is an Abelian group under addition.
Comparing this with \eqref{eqn:BE+UB}, we see the choice $M = M(|A|)$, given by \eqref{eqn:MAminBVPolyLS} is optimal.
\end{proof}

\section{Proofs of Theorem~\ref{thm:AddEng} and Corollary~\ref{sum-productestiamte} }
\begin{proof}[Proof of Theorem~\ref{thm:AddEng}]
We proceed similarly to the proof of \cite[Theorem~6]{RNRuShk}. Note that
\begin{align*}
    E_+(A, B) &= |C|^{-2}|\{\,(a, a^\prime, b, b^\prime, c, c^\prime)\in A^2\times B^2\times C^2: a + b c c^{-1} = a^\prime + b^\prime c^\prime (c^\prime)^{-1}\,\}|\\
    &\leq |C|^{-2}|\{\,(a, a^\prime, s, s^\prime, c, c^\prime)\in A^2\times (BC)^2\times (C^{-1})^2: a + s  c = a^\prime + s^\prime c^\prime\,\}|.
    \end{align*}
    The required result then follows by applying Proposition~\ref{thm:PPIM2}.
\end{proof}

\begin{proof}[Proof of Corollary~\ref{sum-productestiamte}]
Since $|A|\gg q^3$, we may assume $A\subseteq GL_2(\F_q)$. We use Theorem~\ref{thm:AddEng}, with $A=B=C$ and apply the lower bound on $E_+(A)$ given by \eqref{eqn:AddECS} to obtain \eqref{eqn:SRB1}. To prove \eqref{eqq99}, we follow the same process and apply the assumption $|AA|\ll |A|$, to obtain $|A+A|\gg \min\{\,q^4,\, |A|^3/q^{13/2}\,\}$, which gives the required result.
\end{proof}

\section{Proofs of Theorem~\ref{thm:3vexp} and Theorem \ref{thm:SarkM2v2}}
\begin{proof}[Proof of Theorem~\ref{thm:3vexp}]
For $\lambda \in AB +C$, write 
\[
t(\lambda) = |\{\,(a, b, c)\in A\times B \times C: a b + c = \lambda\,\}|.
\]
By the Cauchy-Schwarz inequality, we have
\[
(|A||B||C|)^2 = \left(\sum_{\lambda\in AB+C} t(\lambda)\right)^2 \leq |AB +C|\sum_{\lambda\in AB+C} t(\lambda)^2.
\]
Further noting that
\[
\sum_{\lambda\in AB+C} t(\lambda)^2 = \mathcal{I}(A, B, -C, -A, B, C).
\]
We apply Proposition~\ref{thm:PPIM2} to obtain
\[
|AB + C|\gg \min\left\{\,q^4,\, \frac{|A||B||C|}{q^{13/2}}\,\right\}.
\]
This immediately implies the required result.

For the set $(A+B)C$, as above we have 
\[|(A+B)C|\ge \frac{|A|^2|B|^2|C|^2}{|\{\,(a, b, c, a', b', c')\in (A\times B\times C)^2\colon (a+b)c=(a'+b')c'\,\}|}.\]
To estimate the denominator, we follow the argument in the proof of Proposition \ref{thm:PPIM2}. In particular, we first define a graph $G$ with the vertex set $V=M_2(\mathbb{F}_q)\times M_2(\mathbb{F}_q)\times M_2(\mathbb{F}_q)$, and there is a direct edge going from $(a, e, c)$ to $(b, f, d)$ if $b a+e f=c+d$. The only difference here compared to that graph in Section $3$ is that we switch between $b a$ and $a b$. By using a similar argument as in Section $3$, we have this graph is a $(q^{12}, q^8,c q^{13/2})$-digraph, where $c$ is a positive constant. 

To bound the denominator, we observe that the equation 
\begin{equation*}
	(a+b)c=(a'+b')c'
\end{equation*}
gives us a direct edge from $(c, -b', -ac )$ to $(b, c', a'c')$. So let $U:=\{(c, -b', -ac)\colon a\in A, c\in C, b'\in B\}$ and $W=\{(b, c', a'c')\colon b\in B, c'\in C, a'\in A\}$. Since $C\subseteq GL_2(\mathbb{F}_q)$, we have $|U|=|W|=|A||B||C|$. So applying Lemma \ref{edge}, the number of edges from $U$ to $W$ is at most 
\[\frac{|A|^2|B|^2|C|^2}{q^4}+q^{13/2}|A||B||C|.\]
In other words, 
\[|\{\,(a, b, c, a', b', c')\in (A\times B\times C)^2\colon (a+b)c=(a'+b')c'\,\}|\ll \frac{|A|^2|B|^2|C|^2}{q^4}+q^{13/2}|A||B||C|,\]
and we get the desired estimate.
\end{proof}
\begin{proof}[Proof of Theorem \ref{thm:SarkM2v2}]
By the Cauchy-Schwarz inequality and Proposition~\ref{thm:PPIM2}, we have
\begin{align*}
   \mathcal{J} &= |\{\,(a, b, c, d)\in A\times B \times C\times D: a+b = c d\,\}|\\
    &\leq |B|^{1/2}|\{\,(a, a^\prime, c, c^\prime, d, d^\prime) \in A^2 \times C^2 \times D^2: c d - a = c^\prime d^\prime - a^\prime\,\}|^{1/2}\\
    &\ll  \frac{|A||B|^{1/2}|C||D|}{q^2} + q^{13/4} (|A||B||C||D|)^{1/2}.
\end{align*}
\end{proof}

\section*{Acknowledgments}
Thang Pham was supported by Swiss National Science Foundation grant P4P4P2-191067.

\end{document}